\numberwithin{equation}{section}
\newcommand{\R}{\mathbb{R}}
\newcommand{\N}{{\mathbb N}}
\newcommand{\ff}{{i=1,\ldots,p}}
\newtheorem{theorem}{Theorem}[section]
\newtheorem{lemma}{Lemma}[section]
\begin{document}

\renewcommand{\thefootnote}{\arabic{footnote}}

\begin{center}
{\Large \textbf{Statistical analysis of the  non-ergodic
fractional Ornstein-Uhlenbeck process with periodic mean}} \\[0pt]
~\\[0pt]
Rachid Belfadli\footnote{Research Group of Geometry, Stochastic
Analysis and Applications, Department of Mathematics, Faculty of
Sciences and Techniques, Cadi Ayyad University, Marrakech, Morocco.
E-mail:
 \texttt{r.belfadli@uca.ma}}\  Khalifa Es-Sebaiy\footnote{ Department of Mathematics, Faculty of
Science, Kuwait University, Kuwait. E-mail:
 \texttt{khalifa.essebaiy@ku.edu.kw}}\
  Fatima-Ezzahra Farah\footnote{National School of Applied Sciences-Marrakech, Cadi Ayyad University, Marrakech, Moroccco. E-mail:
 \texttt{farah.fatima.ezzahra@gmail.com}}
\\[0pt]
~\\[0pt]
 Cadi Ayyad University and Kuwait University \\[0pt]
~\\[0pt]
\end{center}

\begin{abstract}
Consider a periodic, mean-reverting Ornstein-Uhlenbeck process
$X=\{X_t,t\geq0\}$ of the form $d X_{t}=\left(L(t)+\alpha
X_{t}\right) d t+ dB^H_{t}, \quad t \geq 0$, where
$L(t)=\sum_{i=1}^{p}\mu_i\phi_i (t)$ is a periodic parametric
function, and $\{B^H_t,t\geq0\}$ is a fractional Brownian motion of
Hurst parameter $\frac12\leq H<1$. In the ``ergodic'' case
$\alpha<0$, the parametric estimation of
$(\mu_1,\ldots,\mu_p,\alpha)$ based on continuous-time observation
of $X$ has been considered in Dehling et al. \cite{DFK}, and in
Dehling et al. \cite{DFW} for $H=\frac12$, and $\frac12<H<1$,
respectively. In
 this paper we consider the ``non-ergodic'' case   $\alpha>0$, and for all $\frac12\leq
 H<1$. We analyze the strong consistency and the asymptotic distribution for  the  estimator of
 $(\mu_1,\ldots,\mu_p,\alpha)$ when the whole trajectory of $X$ is observed.
\end{abstract}

\noindent {\bf Key words:}  Parameter estimation; Strong
consistency; Joint asymptotic  distribution; Fractional
Ornstein-Uhlenbeck process; Periodic mean function; Young integral.

\noindent {\bf 2010 AMS Classification Numbers:}  60G15; 60G22;
62F12; 62M09; 62M86.

\section{Introduction}\label{sec1}
While the statistical inference of It\^o type diffusions has a long
history  (see e.g. Basawa and  Scott \cite{BS}, Kutoyants
\cite{kutoyants}, Liptser and Shiryaev \cite{LS} and the references
therein), the statistical analysis for equations driven by
fractional Brownian motion (fBm) is relatively recent. The
development of stochastic calculus with respect to the fBm allowed
to study such models. Estimation of the drift parameters in
fractional-noise-driven Ornstein-Uhlenbeck processes is a problem
that is both well-motivated by practical needs and theoretically
challenging. In the finance context, a practical motivation to study
this estimation problem is to provide tools to understand volatility
modeling in finance. Indeed, any mean-reverting model in discrete or
continuous time can be taken as a model for stochastic volatility.
Recently, several researchers have been interested in studying
statistical estimation problems when the volatility exhibits
long-memory, which means that the volatility today is correlated to
past volatility values with a dependence that decays very slowly.

 In the present paper we consider a fractional Ornstein-Uhlenbeck
 (fOU)
process with periodic mean $X:=\lbrace X_t, t\geq 0\rbrace $,
defined as solution of the following linear stochastic differential
equation
\begin{equation}\label{PeriodicMean}
  X_0=0, \quad dX_t=(L(t) +\alpha X_t)dt + dB^H_t, \quad  t\geq 0,
\end{equation}
where $B^H=\lbrace B^H_t, t\geq 0\rbrace $ is a fractional Brownian
motion with Hurst index $ \frac{1}{2}\leq H<1$, $\alpha >0$ and
$L(t)=\sum_{i=1}^{p}\mu_i\phi_i (t)$ for all $t\in [0, 1]$
 with $(\mu_1,\ldots, \mu_p)\in \R^{p}$, and     the functions $\phi_i$,  $i=1,\ldots,p$, are bounded $1$-periodic $L^2([0, 1])$-orthonormal
 functions. Let $\mu:=\left(\mu_1,\ldots,\mu_p\right)$, and $\theta:=(\mu,
 \alpha)$.\\

 Let us recall some results on parameter estimation related
 to the process \eqref{PeriodicMean}:
 \begin{itemize}
 \item ``Ergodic'' case $\alpha<0:$ When $H=\frac12$, the maximum likelihood estimator (MLE) of $\theta$  has been studied by \cite{DFK} based on continuous-time observation
 of process $X$, defined by \eqref{PeriodicMean}. The authors
  proved the strong consistency and
asymptotic normality of the MLE. Moreover,   at this stage it is
worth noticing that the MLE coincides with the least squares
estimator (LSE) as well. When $\frac12<H<1$, the consistency and
asymptotic normality of the LSE of $\theta$ based on continuous-time
observation of $X$, have been studied in \cite{DFW,BEV2017}. Also,
some non-Gaussian extensions of the model \eqref{PeriodicMean} have
been considered by several authors (see e.g. \cite{NT,ST}), by
replacing the fBm in \eqref{PeriodicMean} by a Hermite process. On
the other hand, for $p=1$ and $\phi_1=1$, a large number of research
articles considered the problem of drift parameter estimation for
various fractional diffusions and in particular for the fOU process,
we refer among many others to \cite{KL,HNZ,EEV,EV,DEV,SV}.

 \item ``Non-ergodic'' case $\alpha>0:$  When
$p=1$ and $\phi_1=1$, several researchers have been interested in
studying parameter estimation problems in various fractional
Gaussian models related to \eqref{PeriodicMean}, we refer among many
others to \cite{BEO,EEO,EE,AAE,EAA}.
 \end{itemize}

However, in the ``non-ergodic'' case corresponding to $\alpha>0$, no
authors as far as we know have ever studied the drift parameter
estimation for the model \eqref{PeriodicMean} in its general form.
So, our goal in the present paper is to consider
 the MLE  and the LSE for $\theta$, respectively, when $H=\frac12$ and
 $\frac12<H<1$, based on continuous-time observation of $X$. We study the strong consistency and the
asymptotic distribution for those estimators.\\

The rest of the paper is structured as follows. In Section 2 we
analyze some properties of our model. In Section 3 we prove  the
strong consistency for  estimator $\hat{\theta}_n=(\hat{\mu}_n,
\hat{\alpha}_{n})$, defined by \eqref{estimator-theta}. Section 4 is
devoted to the joint asymptotic distribution of $\left(e^{\alpha
n}(\hat{\alpha}_n-\alpha),n^{1-H}\left(\hat{\mu}_n-\mu
\right)\right)$, as $n\rightarrow\infty$.\\

In what follows, $C$   denotes a generic positive constant (perhaps
depending on $H$ and $\alpha$, but not on anything else), which may
change from line to line.

\section{Some almost sure convergence properties}
In this section, we  study some properties of our model
\eqref{PeriodicMean}, which rely on the periodicity of the mean
function and the fact that $\alpha>0$. These properties will be
needed  in order to analyze the
asymptotic behavior of the LSE.\\

 Consider the fOU process with periodic mean that is defined as the solution
to the Langevin equation whose drift is a periodic function, given
by \eqref{PeriodicMean}.\\  We assume that the parameter
$\theta=(\mu, \alpha)$ is unknown and our aim is to estimate it by
using maximum likelihood method when $H=\frac12$, and least squares
method when $\frac12<H<1$. More precisely, we consider the following
LSE $\hat{\theta}_n$ for $\theta$, which coincides with the MLE when
$H=\frac12$:

\begin{align}
\hat{\theta}_n:= Q_n^{-1}P_n,\label{form1-theta-estimator}
\end{align}
where $$P_n:=\left(\int_0^n \phi_1(s)dX_s, \cdots, \int_0^n
\phi_p(s)dX_s, \int_0^nX_sdX_s\right)^{T} \, \,\,\,
\mbox{and}\,\,\,\, Q_n:=\begin{pmatrix}
A_{p, n} & U_n \\\\
U_n^{T} & V_n
\end{pmatrix},$$
with $$U_n=\left(\int_0^n \phi_1(s)X_sds, \cdots, \int_0^n
\phi_p(s)X_s ds\right)^{T}, \,\,  V_n= \int _0^n X^2_s ds,$$ and
$$  A_{p, n}:= \left(\int_0^n \phi_i(s)\phi_j(s)ds\right)_{1\leq i, j \leq p }= nI_p, \,\, \mbox{since}\,\,\int_0^1 \phi_i(s)\phi_j(s)ds
=  \left\{
\begin{array}{ll}
1 & \mbox{ if }i=j, \\
\\
0 & \mbox{ if }i\neq j.
\end{array}%
\right.$$ Thus \[Q_n:=\begin{pmatrix}
nI_p & U_n \\\\
U_n^{T} & V_n
\end{pmatrix}.\]
Straightforward calculation yields that for every $a\in\R^p$ and
$b\in\R$,
$$\begin{pmatrix}
I_p & a \\\\
a^{T} & b
\end{pmatrix}^{-1}=
\begin{pmatrix}
I_p+\frac{aa^T}{b-\|a\|^2} & \frac{-a}{b-\|a\|^2} \\\\
\frac{-a^T}{b-\|a\|^2} & \frac{1}{b-\|a\|^2}
\end{pmatrix}.
$$
Thus the explicit expression of the inverse matrix $Q_n^{-1}$ can be
written as:
$$Q_n^{-1}= \frac{1}{n}\begin{pmatrix}
I_p+\gamma_n\Lambda_n\Lambda_n^T & -\gamma_n\Lambda_n \\\\
-\gamma_n\Lambda_n^T& \gamma_n
\end{pmatrix},$$
where \[\Lambda_n:=(\Lambda_{n,1}, \ldots,
\Lambda_{n,p})^T:=\frac{1}{n}U_n,\quad
\gamma_n:=\left(\frac{1}{n}\int_0^n X_s^2 ds -\sum_{i=1}^{p}
\Lambda_{n,i}^2 \right)^{-1}.\] On the other hand, the explicit
strong solution of the Langevin equation (\ref{PeriodicMean}) is
given by
\begin{eqnarray*}
X_t&=&e^{\alpha t}\left(\int_{0}^{t} e^{-\alpha s} L(s) ds
+\int_{0}^{t} e^{-\alpha s} dB^H_s\right)\\
&=:&e^{\alpha t}\left(A_t+\zeta_t\right),\quad t\geq 0,
\end{eqnarray*}
where, for every $t\geq 0$,
\begin{equation}
A_t:=\int_{0}^{t} e^{-\alpha s} L(s) ds;\quad \zeta_t:=\int_{0}^{t}
e^{-\alpha s} dB^H_s=e^{-\alpha t}B^H_t+\alpha Z_t\label{zeta},
\end{equation}
 with
\begin{equation}Z_t:=\int_{0}^{t} e^{-\alpha s} B^H_sds, \quad t\geq 0.\label{exp-Z}
 \end{equation}
Therefore, the process $X$ can  be rewritten as
\begin{equation}
X_t=e^{\alpha t}A_t+\alpha e^{\alpha t}Z_t+ B^H_t, \quad t\geq 0.
\label{form_X_t}
\end{equation}
On the other hand, from Equation (\ref{PeriodicMean}), we can also
write
\begin{equation}
X_t=\tilde{L}(t)+\alpha\Sigma_t+B^H_t,\quad t\geq
0,\label{form_X_Sigma}
\end{equation}
where \begin{equation}\tilde{L}(t):=\int_0^t L(s) ds,\quad
\Sigma_t:=\int_0^t X_s ds,\quad
t\geq0.\label{def_L_Sigma}\end{equation}
 Note that  almost surely
all paths of $X$ are $\gamma$-H\"older continuous with $\gamma<H$.
Indeed,  it is well known that the trajectories of $B^H$ are
$\gamma$-H\"older continuous with $\gamma<H$ (see, for example,
\cite[page 274]{nualart-book}). Moreover, since $L(t)$ is bounded,
we have, for every $s,t\geq0$,
\[\left|A_t-A_s\right|\leq \int_{s}^{t} e^{-\alpha r} |L(r)| dr\leq C |t-s|,\]
which implies that the $A_t$ $1$-H\"older continuous. Furthermore,
we have for every $s,t\in[0,T]$,
\[|Z_t-Z_s|=\left|\int_{s}^{t} e^{-\alpha r} B^H_rdr\right|\leq \sup_{r\in[0,T]} |B^H_r| |t-s|,\]
which proves that the trajectories of $Z$ are $1$-H\"older
continuous.  Combining these facts together with \eqref{form_X_t},
the desired result is obtained.
\\
Therefore, the estimator \eqref{form1-theta-estimator} is well
defined in the following sense:
\begin{itemize}
\item For $H=\frac12$, the stochastic integrals in the expression
\eqref{form1-theta-estimator} of $\hat{\theta}_n$  are understood in
the It\^o sense. Moreover the integral, that is the pth component of
$P_n$ above, $\int_{0}^{n} X_{s} d X_{s}=\frac{1}{2}\left(X_{n}^{2}
-n\right)$, by  It\^o formula.
\item For $\frac12<H<1$, the stochastic integrals in the expression
\eqref{form1-theta-estimator} of $\hat{\theta}_n$  are understood in
the Young sense (see Appendix). In addition the integral, that is
the pth component of $P_n$ above, $\int_{0}^{n} X_{s} d
X_{s}=\frac{1}{2}X_{n}^{2}$,    using the integration by parts
formula \eqref{IBP}.
\end{itemize}
Thus, in the rest of the paper,  we will use
$\frac{1}{2}\left(X_{n}^{2} -n\right)$ instead of $\int_{0}^{n}
X_{s} d X_{s}$ when  $H=\frac12$, and $\frac{1}{2}X_{n}^{2}$ instead
of $\int_{0}^{n} X_{s} d X_{s}$ when  $\frac12<H<1$. We will also
make use of the following form of $\hat{\theta}_n$:
\begin{align}
\hat{\theta}_n:&=(\hat{\mu}_n,
\hat{\alpha}_{n}),\label{estimator-theta}
\end{align}
where
\begin{align}\label{estimator1}
\hat{\alpha}_{n}:= \frac{\gamma_n}{n}\left(\int_{0}^{n} X_{s} d
X_{s}-\sum_{k=1}^p \Lambda_{n, k} \int_{0}^{n} \phi_k(s)dX_s\right),
\end{align}
and   $\hat{\mu}_n:=\left(\hat{\mu}_{n, 1},\ldots,\hat{\mu}_{n,
p}\right)$ such that, for all $i=1,\ldots, p$,
\begin{align}\label{estimator2}
\hat{\mu}_{n, i}:&=\frac{1}{n}\left(\int_{0}^{n} \phi_{i}(s)dX_s +
\gamma_n \Lambda_{n, i}\sum_{k=1}^p \Lambda_{n, k} \int_{0}^{n}
\phi_k(s)dX_s -\gamma_n \Lambda_{n, i}  \int_{0}^{n} X_{s} d
X_{s}\right).
\end{align}

To prove our main results we will need the following lemmas.

\begin{lemma}\label{Lemma 1} Assume $\frac12\leq H< 1$. Let $\{\zeta_t,t\geq0\}$ and $\{Z_t,t\geq0\}$ be given by \eqref{zeta} and \eqref{exp-Z}. Then,
\begin{eqnarray}
\lim_{t\rightarrow\infty}\frac{B^H_t}{t^{\delta}}= 0\ \mbox{ almost
surely  for all } H<\delta<1,\label{i_lemma1}
\end{eqnarray}
\begin{eqnarray}                  \lim_{t\rightarrow\infty}Z_t= Z_{\infty}:=\int_0^{\infty}e^{-\alpha s}B^H_s ds \ \mbox{ almost
surely and in } L^2(\Omega),\label{ii_lemma1}
\end{eqnarray}
\begin{eqnarray}  \lim_{t\rightarrow\infty}e^{-2 \alpha t}
\int_{0}^{t} e^{2 \alpha s} \zeta_{s}^{2} ds =\frac{\alpha}{2}
Z_{\infty}^{2} \quad \text { almost surely},\label{iii_lemma1}
\end{eqnarray}
\begin{eqnarray}
\lim_{n\rightarrow\infty}\frac{1}{n}\int_0^n \phi_i(s)dB^H_s= 0
\quad \text { almost surely  for every } i=1,\ldots,p.\label{cv_fBm}
\end{eqnarray}
\end{lemma}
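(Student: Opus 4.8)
The four statements are of increasing difficulty, and I would treat them in order, each one feeding the next. For \eqref{i_lemma1}, the plan is to invoke the well-known law of the iterated logarithm / almost-sure growth bound for fBm: $B^H_t = o(t^{H}\sqrt{\log\log t})$ a.s., or more simply the self-similarity plus a Borel--Cantelli argument on the dyadic blocks $[2^n,2^{n+1}]$ combined with the maximal inequality for the Gaussian process $B^H$. Since any $\delta>H$ beats $t^{H}\sqrt{\log\log t}$, dividing by $t^\delta$ kills the fBm in the limit. Alternatively one can cite $\lim_{t\to\infty} B^H_t/t = 0$ a.s.\ (a standard fact, e.g.\ from \cite{nualart-book}) and note $t^\delta \gg t$ is not needed — in fact $\delta<1$, so one really does need the sharper $H$-dependent bound, not just linear growth; I would state the $t^H\sqrt{\log\log t}$ bound and be done.

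For \eqref{ii_lemma1}, I would show the improper integral $\int_0^\infty e^{-\alpha s}B^H_s\,ds$ converges. Almost-sure convergence follows from \eqref{i_lemma1}: pick $\delta\in(H,1)$, then $|e^{-\alpha s}B^H_s|\le e^{-\alpha s}\cdot C_\omega s^\delta$ for $s$ large, which is integrable on $[0,\infty)$, so $Z_t$ is a.s.\ Cauchy as $t\to\infty$. For the $L^2(\Omega)$ convergence I would compute $\E[(Z_t-Z_s)^2] = \int_s^t\int_s^t e^{-\alpha u}e^{-\alpha v}\,\E[B^H_u B^H_v]\,du\,dv$ and bound $\E[B^H_u B^H_v]\le \tfrac12(u^{2H}+v^{2H})\le u^{2H}+v^{2H}$ (or just $\le (uv)^H$), so the double integral is dominated by $\big(\int_s^\infty e^{-\alpha u}u^{H}\,du\big)^2\to 0$; hence $Z_t$ is Cauchy in $L^2(\Omega)$ and converges to the same a.s.-limit $Z_\infty$.

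For \eqref{iii_lemma1}, recall from \eqref{zeta} that $\zeta_s = e^{-\alpha s}B^H_s + \alpha Z_s$. I would write $e^{2\alpha s}\zeta_s^2 = e^{2\alpha s}\big(e^{-\alpha s}B^H_s+\alpha Z_s\big)^2 = \big(B^H_s + \alpha e^{\alpha s}Z_s\big)^2$ and then expand. The dominant term is $\alpha^2 e^{2\alpha s}Z_s^2$: since $Z_s\to Z_\infty$ a.s., we get $e^{-2\alpha t}\int_0^t \alpha^2 e^{2\alpha s}Z_s^2\,ds \to \alpha^2 Z_\infty^2 \cdot \lim_{t\to\infty} e^{-2\alpha t}\int_0^t e^{2\alpha s}\,ds = \alpha^2 Z_\infty^2/(2\alpha) = \tfrac{\alpha}{2}Z_\infty^2$ by a standard L'Hôpital / Cesàro-type argument for exponentially weighted averages. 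The cross term $2\alpha e^{\alpha s}Z_s B^H_s$ and the term $(B^H_s)^2$ are lower order: using \eqref{i_lemma1}, $(B^H_s)^2 = o(s^{2\delta})$ and $e^{\alpha s}|Z_s B^H_s| = o(e^{\alpha s}s^\delta)$, and in both cases $e^{-2\alpha t}\int_0^t(\cdot)\,ds\to 0$ because the integrand grows at most like $e^{\alpha s}\,\mathrm{poly}(s) = o(e^{2\alpha s})$. So only the $Z_s^2$ term survives.

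Finally, for \eqref{cv_fBm}, I would integrate by parts (Young/Riemann--Stieltjes, valid since $\phi_i$ is of bounded variation — it is bounded $1$-periodic, and I would add the mild assumption, implicit in the model, that $\phi_i$ has bounded variation, or else interpret the integral via $\int_0^n \phi_i\,dB^H = \phi_i(n)B^H_n - \phi_i(0)B^H_0 - \int_0^n B^H_s\,d\phi_i(s)$): then $\frac1n\int_0^n\phi_i(s)\,dB^H_s = \frac{\phi_i(n)B^H_n}{n} - \frac1n\int_0^n B^H_s\,d\phi_i(s)$. The first term $\to 0$ a.s.\ by \eqref{i_lemma1} (with $\delta<1$, so $B^H_n/n\to0$) and boundedness of $\phi_i$; the second is $O\!\big(\frac1n\sup_{s\le n}|B^H_s|\cdot \mathrm{Var}_{[0,n]}(\phi_i)\big) = O\!\big(\frac{n^\delta \cdot n}{n}\big)$ — wait, the total variation of a $1$-periodic function on $[0,n]$ is $O(n)$, so this is $O(n^{\delta-1}\cdot n) = O(n^\delta)$, which does not obviously vanish. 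I would instead group the variation per period: on each unit interval $[k,k+1]$ the increment of $\phi_i$ has fixed total variation $v$, and $\sup_{s\in[k,k+1]}|B^H_s|\le C_\omega (k+1)^\delta$, so $\frac1n\sum_{k=0}^{n-1} v\,C_\omega(k+1)^\delta \le \frac{C_\omega v}{n}\cdot n\cdot n^\delta$ — still $O(n^\delta)$, insufficient. The correct fix: write $B^H_s = (B^H_s - B^H_{\lfloor s\rfloor}) + B^H_{\lfloor s\rfloor}$ and exploit cancellation, since $\int_k^{k+1} d\phi_i = \phi_i(k+1)-\phi_i(k) = 0$ by $1$-periodicity, so $\int_k^{k+1}B^H_{\lfloor s\rfloor}\,d\phi_i(s) = B^H_k\cdot 0 = 0$ and only $\int_k^{k+1}(B^H_s - B^H_k)\,d\phi_i(s)$ remains, which is $O(\sup_{|s-k|\le1}|B^H_s - B^H_k|\cdot v)$. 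By stationarity of increments and a Borel--Cantelli argument, $\max_{k\le n}\sup_{s\in[k,k+1]}|B^H_s-B^H_k| = O((\log n)^{1/2})$ a.s., whence the whole sum is $O(n(\log n)^{1/2})/n = O((\log n)^{1/2})$ — still not $o(1)$ unless I divide more carefully. Honestly the cleanest route is: $\frac1n\int_0^n\phi_i\,dB^H_s$, being a Gaussian random variable, has variance $\E\big[(\frac1n\int_0^n\phi_i\,dB^H)^2\big] = O(1/n^{2-2H})\to0$ by the same estimate as in \eqref{ii_lemma1} together with the orthonormality of $\phi_i$ (the diagonal periodic structure gives $O(n)$ rather than $O(n^2)$ inside), and then upgrade $L^2$-convergence to a.s.\ convergence along $n\to\infty$ via Borel--Cantelli, since $\sum_n n^{-(2-2H)} $ converges when $H<1/2$ but only barely fails for $H$ near $1$ — so one passes through a subsequence $n_j = j^2$ where $\sum_j n_j^{-(2-2H)}<\infty$ for all $H<1$, controls the gaps by monotonicity-type bounds on $\sup_{n_j\le n< n_{j+1}}|\frac1n\int_0^n\phi_i\,dB^H - \frac{1}{n_j}\int_0^{n_j}\phi_i\,dB^H|$, and concludes.

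\textbf{Main obstacle.} The genuinely delicate point is \eqref{cv_fBm}: a naive integration-by-parts bound using the total variation of $\phi_i$ on $[0,n]$ (which is $\Theta(n)$) loses too much, and one must either exploit the exact periodic cancellation $\int_k^{k+1}d\phi_i = 0$ over each period, or argue at the level of the Gaussian variance together with a subsequence Borel--Cantelli argument to get the almost-sure statement. Statements \eqref{i_lemma1}--\eqref{iii_lemma1} are routine given standard fBm growth bounds and exponential-averaging (L'Hôpital) lemmas.
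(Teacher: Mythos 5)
For \eqref{i_lemma1}--\eqref{iii_lemma1} your arguments are standard and correct (growth bound for $B^H$ via Borel--Cantelli on dyadic blocks, domination plus a Cauchy criterion in $L^2$, and the expansion $e^{2\alpha s}\zeta_s^2=(B^H_s+\alpha e^{\alpha s}Z_s)^2$ with an exponential-averaging/L'H\^opital argument); the paper does not reprove these but simply cites Lemma 2.2 of \cite{EE} and Lemmas 2.1--2.2 of \cite{EEO}, so you are supplying proofs where the paper supplies references, and nothing is lost.

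The issue is \eqref{cv_fBm}. You correctly identify the key quantitative input, namely $E[(\frac1n\int_0^n\phi_i\,dB^H_s)^2]\leq Cn^{-2(1-H)}$ (for $H=\frac12$ it is $Cn^{-1}$ by the It\^o isometry), which is exactly the paper's estimate --- though note that only the \emph{boundedness} of the $\phi_i$ is used there, not orthonormality; your parenthetical about ``the diagonal periodic structure giving $O(n)$ rather than $O(n^2)$'' is a red herring, since $\int_0^n\int_0^n|u-v|^{2H-2}\,du\,dv=\frac{n^{2H}}{H(2H-1)}$ regardless. The genuine gap is the upgrade from this variance decay to almost-sure convergence: your proposed route (Borel--Cantelli along $n_j=j^2$ plus ``monotonicity-type bounds'' on the gaps) is left entirely unexecuted, and the gap control is the nontrivial part --- it requires a maximal inequality for $\sup_{n_j\leq n<n_{j+1}}|\int_{n_j}^n\phi_i\,dB^H_s|$, not monotonicity. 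The clean fix, which is what the paper does, is to use Gaussianity properly: for a centered Gaussian variable all $L^p$ norms are comparable to the $L^2$ norm, so $(E|\frac1n\int_0^n\phi_i\,dB^H_s|^p)^{1/p}\leq c_pn^{-(1-H)}$ for every $p\geq1$, and Lemma \ref{Borel-lemma} (equivalently, Chebyshev at a power $p>1/(1-H)$ followed by Borel--Cantelli on the full integer sequence) yields $|\frac1n\int_0^n\phi_i\,dB^H_s|\leq\beta_\epsilon n^{-(1-H)+\epsilon}\to0$ a.s. No subsequence is needed. Your detours through integration by parts against the total variation of $\phi_i$ and through the periodic cancellation $\int_k^{k+1}d\phi_i=0$ are, as you yourself observe, dead ends as written (and the latter would in any case require an extra bounded-variation hypothesis on the $\phi_i$ that the paper does not impose), so they should simply be deleted.
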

\begin{proof}
The proofs of the convergences \eqref{i_lemma1}, \eqref{ii_lemma1}
and \eqref{iii_lemma1} are given in  \cite[Lemma 2.2]{EE},
\cite[Lemma 2.1]{EEO} and \cite[Lemma 2.2]{EEO}, respectively. For
\eqref{cv_fBm}, since the functions $\phi_i$, $\ff$, are bounded, we
have if $\frac12<H<1$,
\begin{eqnarray*}
E\left[\left(\frac{1}{n}\int_0^n \phi_i(s)dB^H_s\right)^2\right]&=&\frac{H(2H-1)}{n^2}\int_0^n\int_0^n \phi_i(u)\phi_i(v) \vert u-v \vert^{2H-2} du dv\\
  &\leq & C\frac{H(2H-1)}{n^2}\int_0^n\int_0^n \vert u-v \vert^{2H-2} du dv=C
  n^{-2(1-H)},
\end{eqnarray*}
if $H=\frac12$,
\begin{eqnarray*}
E\left[\left(\frac{1}{n}\int_0^n
\phi_i(s)dB^H_s\right)^2\right]&=&\frac{1}{n^2}\int_0^n  \phi_i^2(u)
du
  \leq C n^{-1}.
\end{eqnarray*}
Combining these facts together with  Gaussian property of
$\frac{1}{n}\int_0^n \phi_i(s)dB^H_s$ and  Lemma \ref{Borel-lemma},
the proof is finished.
\end{proof}
\begin{lemma}\label{Lemma2}
Let  $\{A_t,\geq 0\}$   be given by \eqref{zeta}. Then, for every
$t\geq 1$,
\begin{equation}
A_t=\frac{A_1}{1-e^{-\alpha}}+R_t, \label{i_lemma2}
\end{equation}
where
\begin{equation*}
R_t:=-\frac{A_1e^{-\alpha[t]}}{1-e^{-\alpha}}+e^{-\alpha[t]}\int_0^{t-[t]}e^{-\alpha
s} L(s)ds, \text{ with $[t]$ is the integer part of $t$}.
\end{equation*}
Moreover, there is a positive constant  $C$ depending only on
$\alpha$ such that, for every $t\geq 1$,
\begin{equation}
\vert R_t\vert\leq C e^{-\alpha t}.\label{ii_lemma2}
\end{equation}
As a consequence, as $t\rightarrow\infty$,
\begin{equation}
A_t\longrightarrow
A_{\infty}:=\frac{A_1}{1-e^{-\alpha}}.\label{iii_lemma2}
\end{equation}
Also, for every $\ff$,
\begin{eqnarray}
\lim_{n\rightarrow\infty}e^{-\alpha n}\int_0^n \phi_i(s) e^{\alpha
s} A_s  ds= \lambda_{\phi_i}A_{\infty}, \ \ \text { where }\
\lambda_{\phi_i}:=\frac{\int_0^1 \phi_i(s)e^{\alpha
s}ds}{e^{\alpha}-1}.\label{iv_lemma2}
\end{eqnarray}
\end{lemma}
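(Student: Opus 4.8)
The plan is to exploit the $1$-periodicity of $L$ together with the explicit form of $A_t$ as a sum of integrals over unit intervals. First I would split $[0,t]$ as $[0,[t]]\cup[[t],t]$ and write $\int_0^{[t]}=\sum_{k=0}^{[t]-1}\int_k^{k+1}$. On each block the substitution $s=k+u$ and the identity $L(k+u)=L(u)$ give $\int_k^{k+1}e^{-\alpha s}L(s)\,ds=e^{-\alpha k}\int_0^1 e^{-\alpha u}L(u)\,du=e^{-\alpha k}A_1$, so that $\int_0^{[t]}e^{-\alpha s}L(s)\,ds=A_1\sum_{k=0}^{[t]-1}e^{-\alpha k}=A_1\frac{1-e^{-\alpha[t]}}{1-e^{-\alpha}}$. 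The same substitution on the leftover piece gives $\int_{[t]}^t e^{-\alpha s}L(s)\,ds=e^{-\alpha[t]}\int_0^{t-[t]}e^{-\alpha u}L(u)\,du$. Adding the two contributions and peeling off the constant $\frac{A_1}{1-e^{-\alpha}}$ yields exactly \eqref{i_lemma2} with the stated $R_t$.

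For \eqref{ii_lemma2} I would bound the two terms of $R_t$ crudely: $\bigl|\frac{A_1e^{-\alpha[t]}}{1-e^{-\alpha}}\bigr|\le\frac{|A_1|}{1-e^{-\alpha}}e^{-\alpha[t]}$ and, since $L$ is bounded and $t-[t]\le 1$, $\bigl|e^{-\alpha[t]}\int_0^{t-[t]}e^{-\alpha u}L(u)\,du\bigr|\le C\,e^{-\alpha[t]}$; using $[t]>t-1$, both are $\le C e^{-\alpha t}$ with a constant $C$ depending only on $\alpha$ (and the fixed model data). Then \eqref{iii_lemma2} is immediate: $R_t\to 0$ as $t\to\infty$, hence $A_t\to A_\infty$.

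For \eqref{iv_lemma2} I would split $e^{-\alpha n}\int_0^n=e^{-\alpha n}\int_0^1+e^{-\alpha n}\int_1^n$. The first piece is $O(e^{-\alpha n})$ because $A_s$ and $\phi_i$ are bounded on $[0,1]$, so it vanishes in the limit. On $[1,n]$ I substitute $A_s=A_\infty+R_s$ from \eqref{i_lemma2}, splitting $\int_1^n\phi_i(s)e^{\alpha s}A_s\,ds$ into $A_\infty\int_1^n\phi_i(s)e^{\alpha s}\,ds$ and $\int_1^n\phi_i(s)e^{\alpha s}R_s\,ds$. For the main term the same periodicity trick applied to $\phi_i$ gives $\int_k^{k+1}\phi_i(s)e^{\alpha s}\,ds=e^{\alpha k}\int_0^1\phi_i(u)e^{\alpha u}\,du$, hence $\int_1^n\phi_i(s)e^{\alpha s}\,ds=\bigl(\int_0^1\phi_i(u)e^{\alpha u}\,du\bigr)\sum_{k=1}^{n-1}e^{\alpha k}=\bigl(\int_0^1\phi_i(u)e^{\alpha u}\,du\bigr)\frac{e^{\alpha n}-e^{\alpha}}{e^{\alpha}-1}$, so multiplying by $A_\infty e^{-\alpha n}$ gives $A_\infty\bigl(\int_0^1\phi_i(u)e^{\alpha u}\,du\bigr)\frac{1-e^{-\alpha(n-1)}}{e^{\alpha}-1}\to\lambda_{\phi_i}A_\infty$. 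For the remainder, \eqref{ii_lemma2} gives $e^{\alpha s}|R_s|\le C$ for $s\ge1$, whence $\bigl|\int_1^n\phi_i(s)e^{\alpha s}R_s\,ds\bigr|\le C\|\phi_i\|_\infty(n-1)$ and the whole remainder is $O(n e^{-\alpha n})\to 0$. Collecting the pieces yields \eqref{iv_lemma2}.

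All the computations here are elementary; the only points requiring a little care are the leftover interval $[[t],t]$ in the decomposition together with making the constant in \eqref{ii_lemma2} uniform in $t$ (which is exactly where $[t]>t-1$ and the boundedness of $L$ are used), and, in \eqref{iv_lemma2}, checking that the $R_s$-contribution is genuinely negligible against $e^{\alpha n}$ rather than merely bounded — which it is, since $e^{\alpha s}|R_s|$ stays bounded while the prefactor $e^{-\alpha n}$ absorbs the extra factor $n$ coming from integrating a bounded function over $[0,n]$.
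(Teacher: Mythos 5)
Your proof is correct and follows essentially the same route as the paper's: periodicity of $L$ and $\phi_i$ reduces the integrals to geometric sums over unit blocks, the decomposition $A_t=A_\infty+R_t$ with $|R_t|\le Ce^{-\alpha t}$ handles the error, and the remainder in \eqref{iv_lemma2} is killed by $ne^{-\alpha n}\to0$. The only (cosmetic) differences are that you split off $[0,1]$ explicitly in \eqref{iv_lemma2} and you make the use of $[t]>t-1$ in \eqref{ii_lemma2} explicit, a point the paper passes over silently.
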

\begin{proof}
Notice that for every positive integer $n\geq1$,
\begin{eqnarray*}
A_n&=&\int_0^n e^{-\alpha s}L(s)ds\\ &=&\sum_{k=0}^{n-1}\int_k^{k+1}e^{-\alpha s}L(s)ds\\
   &=&\sum_{k=0}^{n-1}e^{-\alpha k}\int_0^1 e^{-\alpha s}L(s)ds\\
   &=& A_1\times\frac{1-e^{-\alpha n}}{1-e^{-\alpha}},
\end{eqnarray*}
where in the third equality we used the fact that $L$ is
$1$-periodic. Hence, for every $t\geq 1$,
\begin{eqnarray*}
A_t&=&\int_0^te^{-\alpha s}L(s)ds\\
&=& A_{[t]}+\int_{[t]}^t
e^{-\alpha s}L(s)ds\\
   &=& A_1\times\frac{1-e^{-\alpha [t]}}{1-e^{-\alpha}}+e^{-\alpha[t]}\int_0^{t-[t]}e^{-\alpha s} L(s)ds\\
   &=&\frac{A_1}{1-e^{-\alpha}}+R_t,
\end{eqnarray*}
which implies \eqref{i_lemma2}. On the other hand, since  $L$ is
bounded and $0\leq t-[t]<1$, we have
\begin{eqnarray*}
\vert R_t \vert&=& \left\vert-\frac{A_1 e^{-\alpha[t]}}{1-e^{-\alpha}}+e^{-\alpha [t]}\int_0^{t-[t]} e^{-\alpha s}L(s)ds\right\vert\\
              &\leq& C e^{-\alpha [t]}+ C e^{-\alpha [t]}\int_0^{t-[t]}e^{-\alpha s} ds\\
              &\leq&C e^{-\alpha t}+ C e^{-\alpha t}\int_0^{1}e^{-\alpha s} ds\\
             &\leq & C  e^{-\alpha t},
\end{eqnarray*}
which proves \eqref{ii_lemma2}. Furthermore, the convergence
\eqref{iii_lemma2} is
immediately obtained from \eqref{i_lemma2} and \eqref{ii_lemma2}.\\
Let us now prove \eqref{iv_lemma2}. According to  \eqref{i_lemma2},
we have, for every $\ff$,
\begin{equation*}
e^{-\alpha n}\int_0^n \phi_i(s)e^{\alpha s} A_s
ds=A_{\infty}e^{-\alpha n}\int_0^n \phi_i(s)e^{\alpha s}ds+e^{\alpha
n}\int_0^n e^{\alpha s}\phi_i(s)R(s) ds.
\end{equation*}
Using \eqref{ii_lemma2} and the boundedness of the functions
$\phi_i$, $\ff$, we deduce  that,  as $n\rightarrow\infty$,
\[e^{-\alpha n}\int_0^n e^{\alpha s}\phi_i(s)R(s) ds\longrightarrow
0.\] Moreover, since the functions $\phi_i$, $\ff$, are
$1$-periodic,
\begin{eqnarray}
\lim_{n\rightarrow\infty}e^{-\alpha n}\int_0^n\phi_i(s)e^{\alpha s}
ds&=&\lim_{n\rightarrow\infty}e^{-\alpha
n}\sum_{k=0}^{n-1}\int_k^{k+1}\phi_i(s)e^{\alpha s}
ds\nonumber\\&=&\lim_{n\rightarrow\infty}e^{-\alpha
n}\sum_{k=0}^{n-1}e^{\alpha k}\int_0^1 \phi_i(s)e^{\alpha s}
ds\nonumber\\&=&\lim_{n\rightarrow\infty}\frac{1-e^{-\alpha
n}}{e^{\alpha}-1} \int_0^1 \phi_i(s)e^{\alpha s}ds\nonumber\\&=&
\frac{\int_0^1 \phi_i(s)e^{\alpha s}ds}{e^{\alpha}-1}.\label{cv-phi}
\end{eqnarray}
\end{proof}

\begin{lemma}\label{Lemma2-duplicate} Let $\{X_t,\geq 0\}$ and $\{Z_t,\geq 0\}$ be
given by \eqref{PeriodicMean} and \eqref{exp-Z}, respectively. Then,
for every $\ff$, we have almost surely, as $n\rightarrow\infty$,
\begin{eqnarray}
e^{-\alpha n}\int_0^n \phi_i(s)e^{\alpha s}Z_s ds&\longrightarrow&\lambda_{\phi_i}Z_{\infty},\label{v_lemma2}\\
ne^{-\alpha n}\Lambda_{n,i}=e^{-\alpha n}\int_0^n \phi_i(s)X_s ds&\longrightarrow&\lambda_{\phi_i}(A_{\infty}+\alpha Z_{\infty}),\label{vi_lemma2}\\
e^{-\alpha n}\int_0^n
\phi_i(s)dX_s&\longrightarrow&\alpha\lambda_{\phi_i}(A_{\infty}+\alpha
Z_{\infty}),\label{vii_lemma2}
\end{eqnarray}
where $Z_{\infty}$, $A_{\infty}$ and $\lambda_{\phi_i}$ are defined
by \eqref{ii_lemma1}, \eqref{iii_lemma2} and \eqref{iv_lemma2},
respectively.
\end{lemma}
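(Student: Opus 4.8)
The plan is to prove the three convergences \eqref{v_lemma2}, \eqref{vi_lemma2} and \eqref{vii_lemma2} in that order, each one feeding into the next, by combining the explicit representation \eqref{form_X_t} of $X$ with the already established Lemmas \ref{Lemma 1} and \ref{Lemma2}. For \eqref{v_lemma2}, I would first write
\[
e^{-\alpha n}\int_0^n \phi_i(s)e^{\alpha s}Z_s\,ds
= Z_{\infty}\,e^{-\alpha n}\int_0^n \phi_i(s)e^{\alpha s}\,ds
+ e^{-\alpha n}\int_0^n \phi_i(s)e^{\alpha s}\bigl(Z_s-Z_{\infty}\bigr)\,ds .
\]
The first term converges to $\lambda_{\phi_i}Z_{\infty}$ by \eqref{cv-phi}. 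For the second term I would work on the almost sure event on which $Z_s\to Z_{\infty}$ (Lemma \ref{Lemma 1}, \eqref{ii_lemma1}): given $\varepsilon>0$ pick $T$ with $|Z_s-Z_{\infty}|\le\varepsilon$ for $s\ge T$, split the integral at $T$, bound the part over $[0,T]$ by $C(T)e^{-\alpha n}\to 0$ and the part over $[T,n]$ by $\varepsilon\,\|\phi_i\|_{\infty}\,e^{-\alpha n}\int_T^n e^{\alpha s}\,ds\le \varepsilon\,\|\phi_i\|_{\infty}/\alpha$; letting $n\to\infty$ and then $\varepsilon\downarrow0$ gives \eqref{v_lemma2}.

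For \eqref{vi_lemma2} I would insert $X_s=e^{\alpha s}A_s+\alpha e^{\alpha s}Z_s+B^H_s$ from \eqref{form_X_t}, so that
\[
e^{-\alpha n}\int_0^n \phi_i(s)X_s\,ds
= e^{-\alpha n}\int_0^n \phi_i(s)e^{\alpha s}A_s\,ds
+ \alpha\,e^{-\alpha n}\int_0^n \phi_i(s)e^{\alpha s}Z_s\,ds
+ e^{-\alpha n}\int_0^n \phi_i(s)B^H_s\,ds .
\]
The first term tends to $\lambda_{\phi_i}A_{\infty}$ by \eqref{iv_lemma2}, the second to $\alpha\lambda_{\phi_i}Z_{\infty}$ by \eqref{v_lemma2} just proved, and the third vanishes because, by \eqref{i_lemma1}, on an almost sure event $|B^H_s|\le C s^{\delta}$ for some $\delta<1$ and all $s\ge1$, whence $\bigl|\int_0^n \phi_i(s)B^H_s\,ds\bigr|\le C\,n^{1+\delta}$ and $e^{-\alpha n}n^{1+\delta}\to 0$. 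Summing the three limits gives $\lambda_{\phi_i}(A_{\infty}+\alpha Z_{\infty})$.

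For \eqref{vii_lemma2} I would integrate the equation \eqref{PeriodicMean} against $\phi_i$ (using linearity of the It\^o integral when $H=\frac12$ and of the Young integral when $\frac12<H<1$), obtaining
\[
\int_0^n \phi_i(s)\,dX_s
= \int_0^n \phi_i(s)L(s)\,ds + \alpha\int_0^n \phi_i(s)X_s\,ds + \int_0^n \phi_i(s)\,dB^H_s ,
\]
and multiply by $e^{-\alpha n}$. The middle term tends to $\alpha\lambda_{\phi_i}(A_{\infty}+\alpha Z_{\infty})$ by \eqref{vi_lemma2}. The first term is negligible: since $L=\sum_j\mu_j\phi_j$ with the $\phi_j$ orthonormal and $1$-periodic one has $\int_0^n \phi_i(s)L(s)\,ds=n\mu_i$ (or simply $|\int_0^n \phi_i L|\le C n$), so its contribution is $O(n e^{-\alpha n})\to 0$. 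For the last term, \eqref{cv_fBm} gives $\frac1n\int_0^n \phi_i(s)\,dB^H_s\to 0$ almost surely, hence $\int_0^n \phi_i(s)\,dB^H_s=o(n)=o(e^{\alpha n})$ and $e^{-\alpha n}\int_0^n \phi_i(s)\,dB^H_s\to0$. This yields \eqref{vii_lemma2}.

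The only step that requires genuine care is the first one, where the truncation-at-$T$ argument must be carried out on a fixed full-measure set (the one on which $Z_s\to Z_{\infty}$), since the choice of $T$ depends on $\omega$; everything afterwards is a direct substitution followed by an application of a previously proved limit or a crude polynomial-versus-exponential estimate, with the distinction between the It\^o and Young interpretations of $\int_0^n\phi_i\,dB^H$ already absorbed into \eqref{cv_fBm}.
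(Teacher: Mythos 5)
Your proof is correct and follows essentially the same route as the paper: the same decomposition $Z_{\infty}I_{i,n}+J_{i,n}$ for \eqref{v_lemma2}, the same substitution of \eqref{form_X_t} for \eqref{vi_lemma2}, and the same expansion of $\int_0^n\phi_i(s)\,dX_s$ via \eqref{PeriodicMean} for \eqref{vii_lemma2}. The only immaterial difference is that you dispose of the remainder term by an $\varepsilon$--$T$ truncation on the almost sure event $\{Z_s\to Z_{\infty}\}$, whereas the paper bounds $|Z_s-Z_{\infty}|\leq\int_s^{\infty}e^{-\alpha r}|B^H_r|\,dr$ and invokes the pathwise growth estimate \eqref{i_lemma1} directly.
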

\begin{proof}
First we prove \eqref{v_lemma2}. We have, for every $\ff$,
\begin{eqnarray*}
e^{-\alpha n}\int_0^n \phi_i(s)e^{\alpha s}Z_s ds &=& Z_{\infty} e^{-\alpha n}\int_0^n \phi_i(s)e^{\alpha s}ds+e^{-\alpha n}\int_0^n e^{\alpha s}\phi_i(s)(Z_s-Z_{\infty})ds\\
                                                  &=& Z_{\infty}I_{i,n}+J_{i,n}.
 \end{eqnarray*}
Using \eqref{cv-phi}, we obtain $I_{i,n}\longrightarrow
\lambda_{\phi_i}$, as $n\rightarrow\infty$. Further,
\begin{eqnarray*}
\vert J_{i,n}\vert &\leq & e^{-\alpha n}\int_0^n e^{\alpha s}\vert\phi_i(s)\vert \vert Z_s-Z_{\infty}\vert ds\\
                &\leq & Ce^{-\alpha n}\int_0^n e^{\alpha s}\int_s^{\infty}e^{-\alpha r} \vert B^H_r \vert dr ds\\
                &\leq &Ce^{-\alpha n}\int_0^n e^{\frac{\alpha}{2} s}\int_s^{\infty}e^{-\frac{\alpha}{2} r} \vert B^H_r \vert dr ds\\
                && \longrightarrow0
\end{eqnarray*}
almost surely,  as $n\rightarrow\infty$, using \eqref{i_lemma1}. This  completes the proof of \eqref{v_lemma2}.\\
On the other hand, by \eqref{form_X_t}, we can write, for every
$\ff$,
\begin{eqnarray*}
e^{-\alpha n}\int_0^n \phi_i(s) X_s ds&=&e^{-\alpha n}\int_0^n \phi_i(s)e^{\alpha s}A_s ds+ \alpha e^{-\alpha n}\int_0^n \phi_i(s)e^{\alpha s}Z_sds+e^{-\alpha n}\int_0^n \phi_i(s)B_s^Hds\\
                                     &=& A_{i,n,1}+A_{i,n,2}+A_{i,n,3}.
\end{eqnarray*}
By \eqref{iv_lemma2} and \eqref{v_lemma2}, we obtain, for every
$\ff$,
$A_{i,n,1}+A_{i,n,2}\longrightarrow\lambda_{\phi_i}\left(A_{\infty}+\alpha
Z_{\infty}\right)$ almost surely, as $n\rightarrow\infty$. Moreover,
 for every $\ff$, $A_{i,n,3}\longrightarrow0$ almost surely, as
 $n\rightarrow\infty$, thanks to \eqref{i_lemma1} and the fact that
 the functions $\phi_i$, $\ff$, are bounded. Thus, the convergence
 \eqref{vi_lemma2} is obtained.\\
For \eqref{vii_lemma2}, we have, for every $\ff$,
\begin{eqnarray*}
e^{-\alpha n}\int_0^n\phi_i(s)dX_s&=& e^{-\alpha n}\sum_{i=1}^p
\mu_i\int_0^n \phi^2_i(s) ds +\alpha e^{-\alpha n}\int_0^n
\phi_i(s)X_s ds+ e^{-\alpha n}\int_0^n\phi_i(s)dB^H_s,
\end{eqnarray*}
which converges to  $\alpha\lambda_{\phi_i}(A_{\infty}+\alpha
Z_{\infty})$ almost surely, as
 $n\rightarrow\infty$,  according to \eqref{cv_fBm} and
 \eqref{vi_lemma2}.
Then  the proof of Lemma \ref{Lemma2-duplicate} is completed.
\end{proof}
\begin{lemma}\label{Lemma3}
The following convergences hold almost surely, as
$n\rightarrow\infty$,
\begin{eqnarray}
e^{-\alpha n} X_n&\longrightarrow& A_{\infty}+\alpha Z_{\infty},\label{i_lemma3}\\
e^{-\alpha n}\int_0^n X_s ds &\longrightarrow& \frac{A_{\infty}}{\alpha}+Z_{\infty},\label{ii_lemma3}\\
e^{-2\alpha n}\int_0^n X_s^2 ds &\longrightarrow& \frac{1}{2\alpha}(A_{\infty}+\alpha Z_{\infty})^2,\label{iii_lemma3}\\
\frac{e^{-\alpha n}}{n}\int_0^n \vert B_s^H X_s\vert ds  &\longrightarrow& 0,\label{iv_lemma3}\\
\frac{e^{-\alpha n}}{n}\int_0^n \tilde{L}(s)X_s ds&\longrightarrow&
\left(\frac{A_{\infty}}{\alpha}+Z_{\infty}\right)\int_0^1L(s)ds,\label{v_lemma3}
\end{eqnarray}
where $\tilde{L}(t)$ is defined by \eqref{def_L_Sigma}.
\end{lemma}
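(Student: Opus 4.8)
\textbf{Proof proposal for Lemma \ref{Lemma3}.}

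The plan is to derive all five convergences from the representation \eqref{form_X_t}, namely $X_t=e^{\alpha t}A_t+\alpha e^{\alpha t}Z_t+B^H_t$, together with the already-established facts $A_t\to A_\infty$ (from \eqref{iii_lemma2}), $Z_t\to Z_\infty$ (from \eqref{ii_lemma1}), and $B^H_t/t^\delta\to 0$ for $\delta\in(H,1)$ (from \eqref{i_lemma1}). For \eqref{i_lemma3} I would simply multiply \eqref{form_X_t} by $e^{-\alpha n}$: the first two terms give $A_n+\alpha Z_n\to A_\infty+\alpha Z_\infty$, and $e^{-\alpha n}B^H_n\to 0$ since $B^H_n$ grows at most polynomially. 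For \eqref{ii_lemma3} I would write $e^{-\alpha n}\int_0^n X_s\,ds=e^{-\alpha n}\int_0^n e^{\alpha s}(A_s+\alpha Z_s)\,ds+e^{-\alpha n}\int_0^n B^H_s\,ds$; in the first integral I replace $A_s+\alpha Z_s$ by its limit $A_\infty+\alpha Z_\infty$ (the error is controlled because $A_s+\alpha Z_s-(A_\infty+\alpha Z_\infty)\to 0$ and $e^{-\alpha n}\int_0^n e^{\alpha s}\varepsilon(s)\,ds\to 0$ whenever $\varepsilon(s)\to 0$, a standard Cesàro-type / L'Hôpital argument using $e^{-\alpha n}\int_0^n e^{\alpha s}\,ds\to 1/\alpha$), and the $B^H$ term vanishes after bounding $|B^H_s|\le C_\omega s^\delta$. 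The same replacement-by-limit device, applied to $e^{-2\alpha n}\int_0^n (e^{\alpha s}(A_s+\alpha Z_s)+B^H_s)^2\,ds$ and using $e^{-2\alpha n}\int_0^n e^{2\alpha s}\,ds\to 1/(2\alpha)$, yields \eqref{iii_lemma3}, once one checks that the cross term and the $B^H$-squared term are negligible (the cross term is $O(e^{-2\alpha n}\int_0^n e^{\alpha s}s^\delta\,ds)=O(e^{-\alpha n}n^\delta)\to 0$, and the pure $B^H$ term is $O(e^{-2\alpha n}\int_0^n s^{2\delta}\,ds)\to 0$).

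For \eqref{iv_lemma3} I would bound $|B^H_s X_s|\le |B^H_s|\,|X_s|$ and use $|X_s|\le C_\omega e^{\alpha s}$ (from \eqref{i_lemma3}, or directly from \eqref{form_X_t}) together with $|B^H_s|\le C_\omega s^\delta$, giving $\frac{e^{-\alpha n}}{n}\int_0^n |B^H_s X_s|\,ds\le \frac{C_\omega}{n}e^{-\alpha n}\int_0^n e^{\alpha s}s^\delta\,ds\le \frac{C_\omega}{n}n^\delta\to 0$ since $\delta<1$. For \eqref{v_lemma3} I would use the boundedness of $L$, hence $|\tilde L(s)-s\int_0^1 L(u)\,du|$ is bounded (indeed $\tilde L(s)=\lfloor s\rfloor\int_0^1 L+\int_0^{s-\lfloor s\rfloor}L$, so $\tilde L(s)=s\int_0^1 L(u)\,du+O(1)$), and write $\frac{e^{-\alpha n}}{n}\int_0^n \tilde L(s)X_s\,ds=\left(\int_0^1 L(u)\,du\right)\frac{e^{-\alpha n}}{n}\int_0^n sX_s\,ds+\frac{e^{-\alpha n}}{n}\int_0^n O(1)\cdot X_s\,ds$. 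The second piece is $O\!\left(\frac{e^{-\alpha n}}{n}\int_0^n e^{\alpha s}\,ds\right)=O(1/n)\to 0$. For the first piece I need $\frac{e^{-\alpha n}}{n}\int_0^n sX_s\,ds\to \frac{A_\infty}{\alpha}+Z_\infty$; I would get this by the same argument as \eqref{ii_lemma3} after noting $\frac{1}{n}e^{-\alpha n}\int_0^n s e^{\alpha s}\,ds\to \frac{1}{\alpha}$ (since $\int_0^n se^{\alpha s}\,ds\sim \frac{n}{\alpha}e^{\alpha n}$) and that $\frac1n e^{-\alpha n}\int_0^n s e^{\alpha s}\varepsilon(s)\,ds\to 0$ when $\varepsilon(s)\to0$, plus the negligible $B^H$ contribution $\frac1n e^{-\alpha n}\int_0^n s|B^H_s|\,ds=O(n^{\delta-1})\to 0$.

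The only real obstacle is the recurring lemma that $e^{-\beta n}\int_0^n w(s)\varepsilon(s)\,ds\to 0$ whenever $\varepsilon(s)\to 0$ and $w(s)\in\{e^{\beta s}, se^{\beta s}\}$ with $\beta>0$, together with the exact normalizing constants $e^{-\alpha n}\int_0^n e^{\alpha s}\,ds\to 1/\alpha$, $e^{-2\alpha n}\int_0^n e^{2\alpha s}\,ds\to 1/(2\alpha)$, $\frac1n e^{-\alpha n}\int_0^n s e^{\alpha s}\,ds\to 1/\alpha$. These are elementary (split the integral at a point beyond which $|\varepsilon(s)|<\epsilon$, or apply L'Hôpital / integration by parts for the explicit constants), so I expect no genuine difficulty; the proof is essentially bookkeeping once \eqref{form_X_t}, \eqref{i_lemma1}, \eqref{ii_lemma1}, \eqref{iii_lemma2} are in hand. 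I would present \eqref{i_lemma3} first, then reuse it as the a.s. bound $|X_s|\le C_\omega e^{\alpha s}$ throughout the remaining items.
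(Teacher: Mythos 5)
Your proposal is correct and follows essentially the same route as the paper: all five limits are derived from the representation \eqref{form_X_t} combined with \eqref{i_lemma1}, \eqref{ii_lemma1}, \eqref{iii_lemma2} and L'H\^opital-type (Ces\`aro) arguments, and for \eqref{v_lemma3} you use exactly the paper's decomposition $\tilde L(t)=t\int_0^1 L(s)\,ds+l(t)$ with $l$ bounded by periodicity. The only difference is that you spell out the bookkeeping that the paper declares ``immediate'' for \eqref{i_lemma3}--\eqref{iv_lemma3}.
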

\begin{proof}
The statements  \eqref{i_lemma3}---\eqref{iv_lemma3} can be
immediately obtained from \eqref{form_X_t}, \eqref{i_lemma1},
\eqref{ii_lemma1}, \eqref{iii_lemma2} and L'H\^opital's rule. It
remains to prove \eqref{v_lemma3}. Notice that
 \begin{eqnarray*}
  \tilde{L}(t)&=&\int_0^{[t]}L(s)ds+\int_{[t]}^t L(s)ds\\
              &=&\sum_{k=0}^{[t]-1}\int_k^{k+1} L(s)ds+\int_{[t]}^t L(s)ds\\
              &=&[t]\int_0^1 L(s) ds+\int_{[t]}^t L(s) ds\\
              &=& t\int_0^1 L(s) ds+\left(([t]-t)\int_0^1 L(s) ds+\int_{[t]}^t L(s)ds\right)\\
              &=:& t\int_0^1 L(s) ds+l(t).
\end{eqnarray*}
Moreover, it is clear that  the function $l(t)$ is bounded. So,
\begin{equation*}
\frac{e^{-\alpha n}}{n}\int_0^n \tilde{L}(s)X_s ds=\frac{e^{-\alpha
n}}{n}\int_0^n sX_s ds \left(\int_0^1 L(r)dr\right)+\frac{e^{-\alpha
n}}{n}\int_0^n l(s)X_s ds.
\end{equation*}
Furthermore, using \eqref{i_lemma3} and  L'H\^opital's rule, we
obtain
\begin{equation*}
\frac{e^{-\alpha n}}{n}\left\vert\int_0^n l(s) X_s ds\right\vert\leq
C \frac{e^{-\alpha n}}{n}\int_0^n \vert X_s \vert ds\longrightarrow
0,
\end{equation*}
and
\begin{equation*}
\frac{e^{-\alpha n}}{n}\int_0^n s X_s ds\rightarrow \left(
\frac{A_{\infty}}{\alpha}+Z_{\infty}\right)
\end{equation*}
almost surely, as $n\rightarrow\infty$. Then the desired result is
obtained.
\end{proof}

\section{Strong consistency}
Here we prove the strong consistency of the LSE $\hat{\theta}_n$, as
$n\rightarrow\infty$.
\begin{theorem}Assume $\frac12\leq H<1$. Then, almost surely, as $n\rightarrow
\infty$,
\[\hat{\theta}_n=(\hat{\mu}_n,
\hat{\alpha}_{n})\longrightarrow\theta=(\mu,\alpha).\]
\end{theorem}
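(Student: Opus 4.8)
The plan is to express $\hat\theta_n-\theta$ explicitly in terms of the ``noise'' integrals $\xi_{n,i}:=\int_0^n\phi_i(s)\,dB^H_s$ ($i=1,\ldots,p$) and $\eta_n:=\int_0^n X_s\,dB^H_s$, and then to show that every resulting piece tends to $0$ almost surely by Lemmas~\ref{Lemma 1}--\ref{Lemma3}. Inserting $dX_s=(L(s)+\alpha X_s)\,ds+dB^H_s$ into the components of $P_n$, and using $\int_0^1\phi_i(s)\phi_j(s)\,ds=\delta_{ij}$ together with the $1$-periodicity of the $\phi_i$ (whence $\int_0^n\phi_i\phi_j\,ds=n\delta_{ij}$ and $\int_0^n\phi_i(s)L(s)\,ds=n\mu_i$), one obtains $P_n-Q_n(\mu_1,\ldots,\mu_p,\alpha)^{T}=(\xi_{n,1},\ldots,\xi_{n,p},\eta_n)^{T}$, an identity that holds for both conventions adopted for $\int_0^n X_s\,dX_s$ in Section~2. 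Multiplying by the explicit $Q_n^{-1}$ of Section~2 (equivalently, simplifying \eqref{estimator1}--\eqref{estimator2}) then yields, for $i=1,\ldots,p$,
\[
\hat\alpha_n-\alpha=\frac{\gamma_n}{n}\Big(\eta_n-\sum_{k=1}^p\Lambda_{n,k}\,\xi_{n,k}\Big),\qquad
\hat\mu_{n,i}-\mu_i=\frac1n\,\xi_{n,i}-\Lambda_{n,i}\big(\hat\alpha_n-\alpha\big),
\]
so the theorem reduces to showing that both right-hand sides vanish almost surely.

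Next I would record the exact exponential orders involved. Since $Z_\infty$ is a non-degenerate centered Gaussian random variable, $A_\infty+\alpha Z_\infty\neq0$ almost surely. By Lemma~\ref{Lemma2-duplicate}~\eqref{vi_lemma2}, $n e^{-\alpha n}\Lambda_{n,i}\to\lambda_{\phi_i}(A_\infty+\alpha Z_\infty)$, so $\Lambda_{n,i}$ has exact order $e^{\alpha n}/n$; combined with Lemma~\ref{Lemma3}~\eqref{iii_lemma3} this gives
\[
n e^{-2\alpha n}\gamma_n^{-1}=e^{-2\alpha n}\int_0^n X_s^2\,ds-\frac1n\sum_{k=1}^p\big(n e^{-\alpha n}\Lambda_{n,k}\big)^2\ \longrightarrow\ \frac{(A_\infty+\alpha Z_\infty)^2}{2\alpha},
\]
hence $\frac1n e^{2\alpha n}\gamma_n$ converges to a finite, strictly positive limit, so $\gamma_n$ has exact order $n e^{-2\alpha n}$. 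Finally $\frac1n\xi_{n,i}\to0$ by Lemma~\ref{Lemma 1}~\eqref{cv_fBm}.

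The crux of the proof is the estimate $\frac{e^{-\alpha n}}{n}\eta_n\to0$ almost surely. To obtain it I would integrate by parts in $\eta_n=\int_0^n X_s\,dB^H_s$ (the It\^o product rule when $H=\frac12$, the Young formula \eqref{IBP} when $\frac12<H<1$); substituting $dX_s=(L(s)+\alpha X_s)\,ds+dB^H_s$ into the resulting $\int_0^n B^H_s\,dX_s$ gives
\[
\eta_n=X_nB^H_n-\tfrac12(B^H_n)^2-\int_0^n L(s)B^H_s\,ds-\alpha\int_0^n X_sB^H_s\,ds-\tfrac{c_H}{2}n,
\]
with $c_H=1$ for $H=\frac12$ and $c_H=0$ otherwise. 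Dividing by $n e^{\alpha n}$ and letting $n\to\infty$: $\frac{e^{-\alpha n}}{n}X_nB^H_n=(e^{-\alpha n}X_n)\frac{B^H_n}{n}\to(A_\infty+\alpha Z_\infty)\cdot0=0$ by \eqref{i_lemma3} and \eqref{i_lemma1}; both $\frac{e^{-\alpha n}}{n}(B^H_n)^2$ and $\frac{e^{-\alpha n}}{n}\int_0^n L(s)B^H_s\,ds$ tend to $0$, because $L$ is bounded and, by \eqref{i_lemma1}, $|B^H_s|\le C_\omega(1+s^\delta)$ on almost every path for a fixed $\delta\in(H,1)$, so these grow only polynomially in $n$ and are beaten by $e^{-\alpha n}$; $\alpha\frac{e^{-\alpha n}}{n}\int_0^n X_sB^H_s\,ds\to0$ by Lemma~\ref{Lemma3}~\eqref{iv_lemma3}; and the last term is $\frac{c_H}{2}e^{-\alpha n}\to0$. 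This step is the one I expect to be the main obstacle: since $\Lambda_{n,i}\to\infty$, the product $\Lambda_{n,i}(\hat\alpha_n-\alpha)$ in $\hat\mu_{n,i}-\mu_i$ is indeterminate, which forces one to control $\hat\alpha_n-\alpha$ at the finer rate $o(n e^{-\alpha n})$, i.e.\ $\eta_n=o(n e^{\alpha n})$; the naive identity $\eta_n=\frac12 X_n^2-\sum_k\mu_k\int_0^n\phi_k X_s\,ds-\alpha\int_0^n X_s^2\,ds$ only yields $\eta_n=o(e^{2\alpha n})$, and it is the integration by parts above — which reduces the stochastic integral to $\int_0^n X_sB^H_s\,ds$ (handled by \eqref{iv_lemma3}) and to quantities that are a.s.\ only polynomial in $n$ — that provides the needed sharpening.

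The conclusion is then routine. For $\hat\alpha_n$: $e^{-2\alpha n}\eta_n=(n e^{-\alpha n})\cdot\frac{e^{-\alpha n}}{n}\eta_n\to0$, and since $(n e^{-\alpha n}\Lambda_{n,k})$ and $\frac1n\xi_{n,k}$ are bounded, $e^{-2\alpha n}\Lambda_{n,k}\xi_{n,k}=e^{-\alpha n}\,(n e^{-\alpha n}\Lambda_{n,k})\,\frac1n\xi_{n,k}\to0$, so
\[
\hat\alpha_n-\alpha=\frac1n e^{2\alpha n}\gamma_n\Big(e^{-2\alpha n}\eta_n-\sum_{k=1}^p e^{-2\alpha n}\Lambda_{n,k}\xi_{n,k}\Big)\ \longrightarrow\ 0.
\]
For $\hat\mu_{n,i}$: write $\Lambda_{n,i}(\hat\alpha_n-\alpha)=(n e^{-\alpha n}\Lambda_{n,i})\cdot\frac1n e^{2\alpha n}\gamma_n\cdot\frac{e^{-\alpha n}}{n}\big(\eta_n-\sum_k\Lambda_{n,k}\xi_{n,k}\big)$; the first two factors converge, while $\frac{e^{-\alpha n}}{n}\eta_n\to0$ by the crux estimate and $\frac{e^{-\alpha n}}{n}\Lambda_{n,k}\xi_{n,k}=\frac1n\,(n e^{-\alpha n}\Lambda_{n,k})\,\frac1n\xi_{n,k}\to0$, so $\Lambda_{n,i}(\hat\alpha_n-\alpha)\to0$; combined with $\frac1n\xi_{n,i}\to0$ this gives $\hat\mu_{n,i}\to\mu_i$ for every $i$, i.e.\ $\hat\mu_n\to\mu$. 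Together with $\hat\alpha_n\to\alpha$, this proves the theorem.
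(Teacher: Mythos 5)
Your proof is correct, but it is organized quite differently from the paper's. For $\hat{\alpha}_n$ the paper simply normalizes numerator and denominator by $e^{-2\alpha n}$ and invokes \eqref{vi_lemma2}, \eqref{vii_lemma2}, \eqref{i_lemma3}, \eqref{iii_lemma3}; for $\hat{\mu}_{n,i}$ it works with the representation $X_t=\tilde{L}(t)+\alpha\Sigma_t+B^H_t$ and carries out a term-by-term analysis of $I_{i,n,1}=J_{i,n,1}+\cdots+J_{i,n,6}$ and $I_{i,n,2}$, extracting the limit $\tfrac{1}{2\alpha}(A_\infty+\alpha Z_\infty)^2\mu_i$ from the term $\tfrac{\alpha}{2}\Sigma_n^2\cdot\tfrac1n\int_0^n\phi_i L$. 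You instead pass immediately to the residual form $\hat{\theta}_n-\theta=Q_n^{-1}(\xi_{n,1},\ldots,\xi_{n,p},\eta_n)^T$ — which is essentially the representation \eqref{estimator-alpha}--\eqref{estimator-mu} that the paper only introduces in Section 4 for the asymptotic distribution — and reduce everything to the single sharp estimate $\eta_n=\int_0^nX_s\,dB^H_s=o(ne^{\alpha n})$ a.s., obtained by integration by parts; this plays exactly the role of the paper's Lemma \ref{lemma-decomp-S} (the convergence $e^{-\alpha n}S_n\to0$ together with the polynomial growth of $e^{-\alpha n}\int_0^ne^{\alpha s}dB^H_s$). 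Your route is more economical and more transparent: it unifies the treatment of $\hat{\alpha}_n$ and $\hat{\mu}_n$, makes explicit why the crude bound $\eta_n=o(e^{2\alpha n})$ does not suffice for $\hat{\mu}_n$ (because $\Lambda_{n,i}$ grows like $e^{\alpha n}/n$), and you correctly supply the point, left implicit in the paper, that $A_\infty+\alpha Z_\infty\neq0$ almost surely so that $e^{2\alpha n}\gamma_n/n$ has a finite positive limit. The price is that you essentially front-load part of the Section 4 machinery; the paper's Section 3 argument, while longer, stays entirely within the elementary almost-sure lemmas of Section 2. All the individual steps you use (the identity $P_n-Q_n\theta^T=(\xi_{n,1},\ldots,\xi_{n,p},\eta_n)^T$ under both integral conventions, the It\^o/Young integration by parts with the correct bracket correction $c_Hn$, and the limits taken from Lemmas \ref{Lemma 1}--\ref{Lemma3}) check out.
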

\begin{proof}
Suppose $\frac12<H<1$. Then, using   $\int_{0}^{n} X_{s} d
X_{s}=\frac{1}{2}X_{n}^{2}$ and \eqref{estimator1}, we can write
\begin{eqnarray*}
\hat{\alpha}_n&=&\dfrac{-e^{-2\alpha n}\sum_{k=1}^p \Lambda_{n,k}\int_0^n \phi_k(t)dX_t+\frac{e^{-2\alpha n}}{2}X_n^2}{e^{-2\alpha n}\int_0^n X_s^2 ds - n\sum_{k=1}^p e^{-2\alpha n}\Lambda_{k,n}^2}\\
           &=&\dfrac{-\sum_{k=1}^p\frac{1}{n}\left(e^{-\alpha n}\int_0^n \phi_k(s)X_s ds\right)
           \left(e^{-\alpha n}\int_0^n \phi_k(s) dX_s\right)
           +\frac{e^{-2\alpha n}}{2}X_n^2}{e^{-2\alpha n}\int_0^n X_s^2 ds
           -\frac{1}{n}\sum_{k=1}^p\left(e^{-\alpha n}\int_0^n \phi_k(s)X_s
           ds\right)^2}.
\end{eqnarray*}
Combining this with  \eqref{vi_lemma2}, \eqref{vii_lemma2},
\eqref{i_lemma3} and \eqref{iii_lemma3}, we deduce that
$\hat{\alpha}_n$ converges to $\alpha$, almost surely as
$n\rightarrow\infty$. Hence, it remains to prove the strong
consistency for each $\hat{\mu}_{n,i}$, $\ff$. It follows from
\eqref{estimator2} that $\hat{\mu}_{n,i}$, $\ff$ can be written as
\begin{equation}
\hat{\mu}_{n,i}=\frac{\frac{1}{n}I_{i,n,1}+I_{i,n,2}}{I_{n,3}}\label{mu_hat},
\end{equation}
with
\begin{eqnarray*}
I_{i,n,1}&=&\int_0^n\phi_i(s)dX_s\int_0^n X_s^2 ds-\frac{X_n^2}{2}\int_0^n\phi_i(s)X_s ds,\\
I_{i,n,2}&=&\Lambda_{n,i}\sum_{k=1}^p\Lambda_{n,k}\int_0^{n}\phi_k(s)
dX_s-
\left(\int_0^n\phi_i(s)dX_s\right)\sum_{k=1}^p \Lambda_{n,k}^2,\\
I_{n,3}&=&\int_0^n X^2_s ds-n\sum_{k=1}^p\Lambda_{n,k}^2.
\end{eqnarray*}
According to the convergences \eqref{vi_lemma2} and
\eqref{iii_lemma3}, we have almost surely, as $n\rightarrow\infty$,
\begin{eqnarray}
e^{-2\alpha n}I_{n,3}&=& e^{-2\alpha n}\int_0^n X_s^2 ds-n\sum_{k=1}^p e^{-2\alpha n}\Lambda_{n,k}^2\nonumber\\
                                 &=&e^{-2\alpha n}\int_0^n X_s^2 ds-\frac{1}{n}\sum_{k=1}^p\left(e^{-\alpha n}\int_0^n \phi_i(s)X_s ds\right)^2\nonumber\\
                                 &&\longrightarrow\frac{(A_{\infty}+\alpha
                                 Z_{\infty})^2}{2\alpha}.\label{denominator}
\end{eqnarray}
 Then, it remains to prove the following convergence,  for every $\ff$,
 \begin{equation*}
\frac{ e^{-2\alpha n}}{n}I_{i,n,1}+ e^{-2\alpha
n}I_{i,n,2}\longrightarrow\frac{1}{2\alpha}(A_{\infty}+\alpha
Z_{\infty})^2\mu_i
 \end{equation*}
almost surely, as $n\rightarrow\infty$.  Using \eqref{form_X_Sigma},
we get for $\ff$,
\begin{eqnarray*}
I_{i,n,1}&=&\int_0^n\phi(s)dX_s\left[\int_0^n\tilde{L}(s)d\Sigma_s+\frac{\alpha}{2} \Sigma_n^2+\int_0^n B^H_s d\Sigma_s \right]
\\
       &&-\frac{1}{2}\left(\tilde{L}(n)+\alpha\Sigma_n+B^H_n\right)^2\int_0^n\phi_i(s)d\Sigma_s.
\end{eqnarray*}
Moreover, since
\begin{equation*}
\int_0^n
\phi_i(s)d\Sigma_s=\frac{1}{\alpha}\int_0^n\phi_i(s)dX_s-\frac{1}{\alpha}\int_0^n\phi_i(s)L(s)ds-\frac{1}{\alpha}\int_0^n\phi_i(s)dB^H_s,
\end{equation*}
we obtain
\begin{eqnarray*}
I_{i,n,1}&=&\int_0^n\phi_i(s)dX_s\left(\int_0^n \tilde{L}(s)d\Sigma_s+\int_0^n B^H_s d\Sigma_s\right)\\
          &&-\frac{1}{2}\left[(\tilde{L}(n)+B^H_n)^2+2\alpha\Sigma_n(\tilde{L}(n)+B^H_n)\right]\int_0^n\phi_i(s)d\Sigma_s
          \\
          &&-\frac{1}{2}(\alpha\Sigma_n)^2\left(-\frac{1}{\alpha}\int_0^n\phi_i(s)L(s)ds-\frac{1}{\alpha}\int_0^n\phi_i(s)dB_s^H\right)\\
          &=&J_{i,n,1}+J_{i,n,2}+J_{i,n,3}+J_{i,n,4}+J_{i,n,5}.
\end{eqnarray*}
Using \eqref{vii_lemma2} and \eqref{v_lemma3}, we have almost
surely, as $n\rightarrow\infty$,
\begin{eqnarray*}
\frac{e^{-2\alpha n}}{n}J_{i,n,1}&=&e^{-\alpha n}\int_{0}^n\phi_i(s) dX_s\left(\frac{e^{-\alpha n}}{n}\int_0^n \tilde{L}(s)X_s ds\right)\\
                                 &&\longrightarrow\alpha\lambda_{\phi_i}(A_{\infty}+\alpha Z_{\infty})\left(\frac{A_{\infty}}{\alpha}+Z_{\infty}\right)\int_0^1 L(s)ds.
\end{eqnarray*}
By \eqref{vii_lemma2}and \eqref{iv_lemma3}, we get almost surely, as
$n\rightarrow\infty$,
\begin{eqnarray*}
\frac{e^{-2\alpha n}}{n}J_{i,n,2}&=&e^{-\alpha n}\int_0^n \phi(s)dX_s\left(\frac{e^{-\alpha n}}{n}\int_0^n B^H_s X_sds\right)\\
                                 &&\longrightarrow 0.
\end{eqnarray*}
The convergences \eqref{i_lemma1} and \eqref{vi_lemma2} imply that
almost surely, as $n\rightarrow\infty$,
\begin{eqnarray*}
\frac{e^{-2\alpha n}}{n}J_{i,n,3}&=&-\frac{1}{2}\frac{e^{-\alpha n}}{n}(\tilde{L}(n)+B^H_n)^2 e^{-\alpha n}\int_0^n\phi_i(s)X_s ds\\
                                  &&\longrightarrow 0.
\end{eqnarray*}
According to \eqref{ii_lemma3}, \eqref{vii_lemma2} and
\eqref{i_lemma1}, we get almost surely, as $n\rightarrow\infty$,
\begin{eqnarray*}
\frac{e^{-2\alpha n}}{n}J_{i,n,4}&=&-\alpha \left(e^{-\alpha n}\int_0^n X_s \right)\left(\frac{1}{n}\tilde{L}(n)\right)\left(e^{-\alpha n}\int_0^n \phi_i(s)X_s ds\right)-\\
                                 &&\alpha \left(\frac{e^{-\alpha n}}{n} B^H_n\right)\left(e^{-\alpha n}\int_0^n \phi_i(s)X_s ds\right)\\
                                 &&\longrightarrow-\alpha\lambda_{\phi_i}(A_{\infty}+\alpha Z_{\infty})\left(\frac{A_{\infty}}{\alpha}+Z_{\infty}\right)\int_0^1 L(s)ds.
\end{eqnarray*}
Furthermore, \eqref{ii_lemma3}  and the periodicity property yield
almost surely, as $n\rightarrow\infty$,
\begin{eqnarray*}
\frac{e^{-2\alpha n}}{n}J_{i,n,5}&=&\frac{\alpha}{2}\left(e^{-\alpha n}\int_0^n X_s ds\right)^2 \frac{1}{n}\int_0^n \phi_i(s)L(s)ds\\
                                 &&\longrightarrow\frac{1}{2\alpha}(A_{\infty}+\alpha Z_{\infty})^2\mu_i.
\end{eqnarray*}
Also, \eqref{ii_lemma3} and \eqref{cv_fBm}  imply that almost
surely, as $n\rightarrow\infty$,
\begin{eqnarray*}
\frac{e^{-2\alpha n}}{n}J_{i,n,6}&=&\frac{\alpha}{2}\left(e^{-\alpha n}\int_0^n X_s ds\right)^2\frac{1}{n}\int_0^n \phi_i(s)dB^H_s\\
                                  &&\longrightarrow 0.
\end{eqnarray*}
Consequently, as $n\rightarrow\infty$,
\begin{eqnarray}
\frac{e^{-2\alpha
n}}{n}I_{i,n,1}\longrightarrow\frac{1}{2\alpha}(A_{\infty}+\alpha
Z_{\infty})^2\mu_i\label{numerateur1}
\end{eqnarray} almost surely for every $\ff$.\\
Finally, since $\int_0^n \phi^2 _i(s)ds=n$ for $\ff$, then
\begin{eqnarray*}
I_{i,n,2}&=&\Lambda_{n,i}\sum_{k=1}^p\Lambda_{n,k}\left(n\mu_k+\alpha
n\Lambda_{n,k}+\int_0^n \phi_k(s)
dB^H_s\right)\\&&-\left(n\mu_i+\alpha n\Lambda_{n,i}+\int_0^n
\phi_i(s)
dB^H_s\right)\sum_{k=1}^n \Lambda_{n,k}^2\\
         &=&\Lambda_{n,i}\sum_{k=1}^p\Lambda_{n,k}\left(n\mu_k+\int_0^n \phi_k(s)
dB^H_s\right)-\left(n\mu_i+\int_0^n \phi_i(s)
dB^H_s\right)\sum_{k=1}^n\Lambda_{n,k}^2,
\end{eqnarray*}
where we used
\begin{equation*}
\int_0^n\phi_i(s)dX_s=\int_0^n\mu_i\phi_i(s)^2 ds+\alpha n
\Lambda_{n,i}+\int_0^n \phi_i(s) dB^H_s.
\end{equation*}
By  \eqref{cv_fBm}  and \eqref{vi_lemma2}, we get for every $\ff$,
almost surely, as $n\rightarrow\infty$,
\begin{equation}
e^{-2\alpha n}I_{i,n,2}\longrightarrow 0\label{numerateur2}.
\end{equation}
Therefore, the facts \eqref{mu_hat}, \eqref{denominator},
\eqref{numerateur1} and \eqref{numerateur2} achieve the proof of
desired
result.\\
For the case $H=\frac12$, it suffices  to use $\int_{0}^{n} X_{s} d
X_{s}=\frac{1}{2}\left(X_{n}^{2}-n\right)$ and the same arguments as
above.
\end{proof}

\section{Asymptotic distribution}

 In order to   investigate  the
asymptotic behavior in distribution of the estimator
$\widehat{\theta}_n$, as $n\rightarrow\infty$, we will need the
following lemmas.
\begin{lemma}\label{lemma-decomp-S}Assume that $\frac12\leq H<1$. Then,  for every $n\geq0$,
\begin{eqnarray}
\frac12X_n^2-\sum_{k=1}^p \Lambda_{n, k} \int_{0}^{n}
\phi_k(s)dX_s&=&n\alpha \gamma_n^{-1}  +\left(A_{\infty} +\alpha Z_n
\right) \int_0^n e^{\alpha s} dB^H_s+S_n,\label{numerator-alpha}
\end{eqnarray} where $Z_n$ is given by (\ref{exp-Z}), and the sequence $S_n$ is defined by
\begin{eqnarray*}S_n&:=& \frac12  (B_n^H)^2+e^{n \alpha}R_nB_n^H-\int_0^n L(s) B_s^H
ds-\alpha \int_0^n (B_s^H)^2 ds-\sum_{k=1}^p \Lambda_{n, k}
\int_{0}^{n} \phi_k(s)dX_s\\&&-\sum_{k=1}^p \Lambda_{n, k}
\int_{0}^{n} \phi_k(s)dB^H_s-\alpha\int_0^n e^{\alpha s}B_s^H R_s ds
+\alpha^2 \int_0^n e^{-\alpha s} B^H_s\int_0^s e^{\alpha r} B^H_r dr
ds.
\end{eqnarray*}
Moreover, as $n\longrightarrow\infty$,
\begin{eqnarray}e^{-\alpha n}S_n\longrightarrow0\ \mbox{ almost surely. }\label{cv-S}
\end{eqnarray}
\end{lemma}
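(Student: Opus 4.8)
The statement bundles together an exact algebraic identity, \eqref{numerator-alpha}, and an almost sure limit, \eqref{cv-S}; I would establish them in that order.

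For \eqref{numerator-alpha} the plan is to begin from the integration by parts formula, which for $\tfrac12<H<1$ gives $\int_0^n X_s\,dX_s=\tfrac12 X_n^2$ (and $\tfrac12(X_n^2-n)$ when $H=\tfrac12$, which only alters the computation by a harmless additive $n$). Substituting the dynamics \eqref{PeriodicMean} yields $\tfrac12 X_n^2=\int_0^n X_sL(s)\,ds+\alpha\int_0^n X_s^2\,ds+\int_0^n X_s\,dB^H_s$. Next I would use the $1$-periodicity and $L^2([0,1])$-orthonormality of the $\phi_i$, which give $\int_0^n\phi_k(s)\phi_i(s)\,ds=n\delta_{ki}$, hence $\int_0^n X_sL(s)\,ds=n\sum_i\mu_i\Lambda_{n,i}$ and $\int_0^n\phi_k(s)\,dX_s=n\mu_k+\alpha n\Lambda_{n,k}+\int_0^n\phi_k(s)\,dB^H_s$; feeding these into $\sum_k\Lambda_{n,k}\int_0^n\phi_k(s)\,dX_s$ and recalling $n\gamma_n^{-1}=\int_0^n X_s^2\,ds-n\sum_k\Lambda_{n,k}^2$, all the $\mu$- and $\alpha n\sum\Lambda_{n,k}^2$-contributions cancel and one is left with
\[
\tfrac12 X_n^2-\sum_{k=1}^p\Lambda_{n,k}\int_0^n\phi_k(s)\,dX_s=n\alpha\gamma_n^{-1}+\int_0^n X_s\,dB^H_s-\sum_{k=1}^p\Lambda_{n,k}\int_0^n\phi_k(s)\,dB^H_s .
\]
It then remains to peel off the principal part $\left(A_\infty+\alpha Z_n\right)\int_0^n e^{\alpha s}\,dB^H_s$ from $\int_0^n X_s\,dB^H_s$. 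For this I would insert $X_s=e^{\alpha s}A_s+\alpha e^{\alpha s}Z_s+B^H_s$ from \eqref{form_X_t}, evaluate $\int_0^n B^H_s\,dB^H_s=\tfrac12(B^H_n)^2$ directly, and integrate by parts in the two remaining integrals using $(e^{\alpha s}A_s)'=\alpha e^{\alpha s}A_s+L(s)$ and $(e^{\alpha s}Z_s)'=\alpha e^{\alpha s}Z_s+B^H_s$ (immediate from \eqref{zeta}--\eqref{exp-Z}). Writing $A_s=A_\infty+R_s$ via Lemma \ref{Lemma2} and invoking the elementary identity $e^{\alpha n}B^H_n=\int_0^n e^{\alpha s}\,dB^H_s+\alpha\int_0^n e^{\alpha s}B^H_s\,ds$ isolates the advertised principal term; a Fubini exchange then rewrites $Z_n\int_0^n e^{\alpha s}B^H_s\,ds-\int_0^n e^{\alpha s}Z_s B^H_s\,ds$ as $\int_0^n e^{-\alpha s}B^H_s\int_0^s e^{\alpha r}B^H_r\,dr\,ds$, and everything that remains is gathered into $S_n$.

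For \eqref{cv-S} I would multiply $S_n$ by $e^{-\alpha n}$ and dispose of the summands one at a time, the guiding dichotomy being that each term is either of at most polynomial order in $n$, hence annihilated by $e^{-\alpha n}$, or already carries a decaying exponential. Fixing $\delta\in(H,1)$ with $B^H_t=o(t^\delta)$ a.s. by \eqref{i_lemma1}, the terms $\tfrac12(B^H_n)^2$, $\int_0^n L(s)B^H_s\,ds$, $\alpha\int_0^n(B^H_s)^2\,ds$, $\alpha\int_0^n e^{\alpha s}R_sB^H_s\,ds$ (using $|R_s|\le Ce^{-\alpha s}$ from \eqref{ii_lemma2}), and the iterated integral $\alpha^2\int_0^n e^{-\alpha s}B^H_s\int_0^s e^{\alpha r}B^H_r\,dr\,ds$ (bounding the inner integral by $C_\varepsilon e^{\alpha s}s^\delta$ via L'H\^opital) are all $o(n^{2\delta+1})$, so multiplication by $e^{-\alpha n}$ sends each to $0$; the term $e^{n\alpha}R_nB^H_n$ contributes $e^{-\alpha n}e^{n\alpha}R_nB^H_n=R_nB^H_n=o(e^{-\alpha n}n^\delta)\to 0$ again by \eqref{ii_lemma2}; and for the $\Lambda_{n,k}$-terms one uses \eqref{vi_lemma2} to get $\Lambda_{n,k}=O(n^{-1}e^{\alpha n})$, whence $e^{-\alpha n}\sum_k\Lambda_{n,k}\int_0^n\phi_k(s)\,dB^H_s$ behaves like $O(1)\cdot n^{-1}\int_0^n\phi_k(s)\,dB^H_s\to 0$ by \eqref{cv_fBm}, and the $\sum_k\Lambda_{n,k}\int_0^n\phi_k(s)\,dX_s$ contribution is treated with the sharp exponential rates in \eqref{vi_lemma2}--\eqref{vii_lemma2}. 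Assembling these estimates yields \eqref{cv-S}.

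The genuinely delicate point is not any single estimate but the bookkeeping behind \eqref{numerator-alpha}: one has to carry roughly a dozen boundary and iterated-integral terms through two successive integrations by parts and a Fubini swap without dropping, duplicating or sign-flipping any of them, and must arrange that precisely the non-vanishing block $\left(A_\infty+\alpha Z_n\right)\int_0^n e^{\alpha s}\,dB^H_s$ is extracted while all the rest is absorbed into $S_n$. Once \eqref{numerator-alpha} is in place, \eqref{cv-S} follows mechanically from Lemmas \ref{Lemma 1}, \ref{Lemma2} and \ref{Lemma2-duplicate}.
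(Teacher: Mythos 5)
Your derivation of the identity \eqref{numerator-alpha} is correct but organized differently from the paper's. The paper never forms the integral $\int_0^n X_s\,dB^H_s$: it squares the decomposition $X_t=\tilde L(t)+\alpha\Sigma_t+B^H_t$ of \eqref{form_X_Sigma}, integrates by parts on $\Sigma_t^2$, on $M_tZ_t$ with $M_t=\int_0^t e^{\alpha s}B^H_s\,ds$, and on the $A$-terms, so that every integral it manipulates is a Riemann--Stieltjes integral against an absolutely continuous integrator; this is why the same identity holds verbatim for $H=\frac12$ with no It\^o correction. You instead start from $\int_0^n X_s\,dX_s$, substitute the dynamics, cancel against $\sum_k\Lambda_{n,k}\int_0^n\phi_k(s)\,dX_s$, and then decompose $\int_0^n X_s\,dB^H_s$ via \eqref{form_X_t}; this is shorter, and for $H=\frac12$ the two It\^o corrections ($+n/2$ from $\int X\,dX$ and $-n/2$ from $\int B\,dB$) do cancel as you assert, though that deserves an explicit line. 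Both routes land on the same collection of remainder terms.

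The one genuinely wrong step is your treatment of the term $-\sum_{k}\Lambda_{n,k}\int_0^n\phi_k(s)\,dX_s$ inside $S_n$ when proving \eqref{cv-S}. By \eqref{vi_lemma2} and \eqref{vii_lemma2},
\begin{equation*}
e^{-\alpha n}\Lambda_{n,k}\int_0^n\phi_k(s)\,dX_s\;\sim\;\frac{e^{\alpha n}}{n}\,\alpha\,\lambda_{\phi_k}^2\left(A_\infty+\alpha Z_\infty\right)^2,
\end{equation*}
which diverges almost surely for generic $\phi_k$ (e.g.\ $\phi_1\equiv1$ gives $\lambda_{\phi_1}=1/\alpha$), so the ``sharp exponential rates'' you invoke show this contribution blows up rather than vanishes. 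The resolution is that this term should not be in $S_n$ at all: it already sits on the left-hand side of \eqref{numerator-alpha}, and your own intermediate identity --- as well as the paper's combination of \eqref{equaX8}--\eqref{equaX10} --- produces an $S_n$ containing only $-\sum_k\Lambda_{n,k}\int_0^n\phi_k(s)\,dB^H_s$ and not the $dX_s$ version; the displayed definition of $S_n$ in the statement carries a spurious duplicate. With that term deleted, both the identity and \eqref{cv-S} are correct, and the remaining estimates in your convergence argument (polynomial growth from \eqref{i_lemma1}, the bound $|R_s|\le Ce^{-\alpha s}$ from \eqref{ii_lemma2}, and \eqref{cv_fBm} combined with \eqref{vi_lemma2} for the $dB^H$ sum) go through exactly as you describe and as in the paper.
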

\begin{proof}Let us prove (\ref{numerator-alpha}). First define the
process
\begin{eqnarray*} M_t:=\int_0^te^{\alpha s}B^H_s ds,\quad t\geq0.
\end{eqnarray*}
According to (\ref{form_X_Sigma}), we have \begin{eqnarray} \frac12
X_t^2=\frac12(\tilde{L}(t))^2+\frac12 (B_t^H)^2
+\tilde{L}(t)B_t^H+\frac12\alpha^2\Sigma_t^2+\alpha\tilde{L}(t)\Sigma_t+\alpha\Sigma_tB_t^H.\label{equaX1}
\end{eqnarray}
Furthermore,  using (\ref{IBP}) and (\ref{form_X_Sigma})
\begin{eqnarray}\frac{\alpha^2}{2}\Sigma_t^2
&=&\alpha^2\int_0^t\Sigma_sd\Sigma_s\nonumber\\&=&\alpha^2\int_0^t\Sigma_sX_sds
\nonumber\\&=&\alpha\int_0^t X_s^2ds-\alpha\int_0^t\tilde{L}(s)
d\Sigma_s-\alpha\int_0^tB^H_sX_sds\nonumber
\\&=&\alpha\int_0^t X_s^2ds-\alpha \tilde{L}(t)\Sigma_t+
\alpha\int_0^t \Sigma_s
L(s)ds-\alpha\int_0^tB^H_sX_sds.\label{equaX2}
\end{eqnarray}
Moreover, it follows from (\ref{form_X_t}) that
\begin{eqnarray}-\alpha\int_0^tB^H_sX_sds &=&-\alpha\int_0^tB^H_se^{\alpha s}A_sds-\alpha^2\int_0^tB^H_se^{\alpha s}Z_sds
-\alpha\int_0^t(B^H_s)^2ds\nonumber\\&=&
-\alpha\int_0^tB^H_se^{\alpha
s}A_sds-\alpha\int_0^t(B^H_s)^2ds-\alpha^2\left[M_tZ_t-\int_0^tM_sdZ_s
\right]\nonumber
\\&=&
-\alpha\int_0^tB^H_se^{\alpha
s}A_sds-\alpha\int_0^t(B^H_s)^2ds-\alpha^2M_tZ_t\label{equaX3}\\&&+\alpha^2\int_0^te^{-\alpha
s}B^H_s\int_0^se^{\alpha r}B^H_rdrds.\nonumber
\end{eqnarray}
On the other hand, by (\ref{form_X_Sigma}) and (\ref{form_X_t}), we
have
\begin{eqnarray}\alpha\Sigma_tB_t^H&=&B_t^H(X_t-\tilde{L}(t)-B_t^H)\nonumber\\
&=&B_t^H\left(-\tilde{L}(t)+e^{\alpha t}A_t+\alpha e^{\alpha
t}Z_t\right).\label{equaX4}
\end{eqnarray}
Combining (\ref{equaX1}), (\ref{equaX2}), (\ref{equaX3})  and
(\ref{equaX4}), we obtain
\begin{eqnarray} \frac12
X_t^2&=&\frac12(\tilde{L}(t))^2+\frac12 (B_t^H)^2+ \alpha\int_0^t
X_s^2ds+ \alpha\int_0^t \Sigma_s L(s)ds-\alpha\int_0^tB^H_se^{\alpha
s}A_sds\label{equaX5}
\\&&-\alpha\int_0^t(B^H_s)^2ds-\alpha^2M_tZ_t+\alpha^2\int_0^te^{-\alpha s}B^H_s\int_0^se^{\alpha
r}B^H_rdrds+e^{\alpha t}B_t^H A_t+\alpha e^{\alpha t}B_t^H
Z_t.\nonumber
\end{eqnarray}
Further,
\begin{eqnarray} -\alpha^2M_tZ_t+\alpha e^{\alpha t}B_t^H
Z_t&=&-\alpha Z_t\left(\alpha M_t-e^{\alpha
t}B_t^H\right)\nonumber\\
&=&\alpha Z_t\int_0^t e^{\alpha s}dB_s^H.\label{equaX6}
\end{eqnarray}
Also, using (\ref{i_lemma2}),
\begin{eqnarray} e^{\alpha t}B_t^H A_t-\alpha\int_0^tB^H_se^{\alpha
s}A_sds &=& A_{\infty}e^{\alpha t}B_t^H+e^{\alpha t}B_t^HR_t-\alpha
A_{\infty}\int_0^tB^H_se^{\alpha s}
ds -\alpha\int_0^tB^H_se^{\alpha s}R_sds\nonumber\\
&=& A_{\infty}\int_0^te^{\alpha s}B_s^Hds+e^{\alpha t}B_t^HR_t
-\alpha\int_0^tB^H_se^{\alpha s}R_sds.\label{equaX7}
\end{eqnarray}
Combining (\ref{equaX5}), (\ref{equaX6})   and (\ref{equaX7}), we
get
\begin{eqnarray} \frac12
X_t^2&=&\frac12(\tilde{L}(t))^2+\frac12 (B_t^H)^2+ \alpha\int_0^t
X_s^2ds+ \alpha\int_0^t \Sigma_s L(s)ds -\alpha\int_0^t(B^H_s)^2ds
\label{equaX8}
\\&&+\alpha^2\int_0^te^{-\alpha s}B^H_s\int_0^se^{\alpha r}B^H_rdrds
+\alpha Z_t\int_0^t e^{\alpha s}dB_s^H+A_{\infty}\int_0^te^{\alpha
s}B_s^Hds+e^{\alpha t}B_t^HR_t\nonumber\\&&
-\alpha\int_0^tB^H_se^{\alpha s}R_sds.\nonumber
\end{eqnarray}
On the other hand, using (\ref{PeriodicMean}),
\begin{eqnarray}
 \sum_{k=1}^p \Lambda_{n, k} \int_{0}^{n} \phi_k(s)dX_s&=&\sum_{k=1}^p \mu_k n\Lambda_{n, k}+\sum_{k=1}^p \alpha n\Lambda_{n,
 k}^2+\sum_{k=1}^p \Lambda_{n, k}\int_{0}^{n} \phi_k(s)dB^H_s\nonumber\\
 &=&\int_0^nL(s)X_sds+\sum_{k=1}^p \alpha n\Lambda_{n,
 k}^2+\sum_{k=1}^p \Lambda_{n, k}\int_{0}^{n} \phi_k(s)dB^H_s.\label{equaX9}
\end{eqnarray}
Further, by (\ref{form_X_Sigma}) and (\ref{IBP}),
\begin{eqnarray}
 \alpha\int_0^t \Sigma_s L(s)ds -\int_0^nL(s)X_sds&=&\int_0^t\left( \alpha\Sigma_s  - X_s\right)ds\nonumber\\
 &=&\int_0^tL(s)\tilde{L}(s)ds-\int_0^tL(s)B_s^Hds\nonumber\\
 &=&\frac12(\tilde{L}(t))^2-\int_0^tL(s)B_s^Hds.\label{equaX10}
\end{eqnarray}
Consequently, the equalities (\ref{equaX8}), (\ref{equaX9})   and
(\ref{equaX10}) lead to (\ref{numerator-alpha}). \\
Finally, the convergence (\ref{cv-S}) is a direct consequence of
\eqref{i_lemma1}, \eqref{ii_lemma2}, \eqref{vi_lemma2} and
\eqref{cv_fBm}.
\end{proof}

\begin{lemma}Assume $\frac12\leq H<1$. Then, we have
\begin{align}\label{estimator-alpha}
\hat{\alpha}_{n}-\alpha= \frac{\left(A_{\infty} +\alpha Z_n \right)
\int_0^n e^{\theta s} dB^H_s+S_{n,H}}{n\gamma_n^{-1}},
\end{align}
where $S_{n,H}=S_n-\frac{n}{2}$ if $H=\frac12$, and $S_{n,H}=S_n$ if
$\frac12<H<1$,  with $S_n$ is defined in Lemma \ref{lemma-decomp-S}.
 Moreover,  we have
\begin{eqnarray}\hat{\mu}_n-\mu=(\alpha-\hat{\alpha}_{n})(\Lambda_{n,1}, \ldots,
\Lambda_{n,p})+\frac{G_n}{n},\label{estimator-mu}
\end{eqnarray}
where
\begin{eqnarray}G_n:=\left( \int_0^n\phi_1(s)dB^H_s,
\ldots,\int_0^n\phi_p(s)dB^H_s\right).\label{def-G}
\end{eqnarray}
\end{lemma}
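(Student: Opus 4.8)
The plan is to obtain both displays by purely algebraic manipulation of the defining formulas \eqref{estimator1}--\eqref{estimator2}, feeding in the decomposition \eqref{numerator-alpha} of Lemma~\ref{lemma-decomp-S} together with the elementary identity
\[
\int_0^n\phi_i(s)dX_s=n\mu_i+\alpha n\Lambda_{n,i}+\int_0^n\phi_i(s)dB^H_s,\qquad \ff,
\]
which follows from \eqref{PeriodicMean}, from $\int_0^n\phi_i(s)\phi_j(s)ds=n\,\delta_{ij}$ (orthonormality and $1$-periodicity of the $\phi_j$), and from $\Lambda_{n,i}=\tfrac1n\int_0^n\phi_i(s)X_sds$. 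No analytic ingredient beyond these is needed.

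For \eqref{estimator-alpha} I would start from \eqref{estimator1}, namely $\hat\alpha_n=\tfrac{\gamma_n}{n}\bigl(\int_0^nX_sdX_s-\sum_{k=1}^p\Lambda_{n,k}\int_0^n\phi_k(s)dX_s\bigr)$, and insert the prescribed value of $\int_0^nX_sdX_s$: it equals $\tfrac12X_n^2$ when $\tfrac12<H<1$ and $\tfrac12(X_n^2-n)$ when $H=\tfrac12$, so in either case the parenthesis reads $\tfrac12X_n^2-\sum_{k=1}^p\Lambda_{n,k}\int_0^n\phi_k(s)dX_s$, minus an additional $\tfrac n2$ in the case $H=\tfrac12$. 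Substituting \eqref{numerator-alpha} rewrites $\tfrac12X_n^2-\sum_{k=1}^p\Lambda_{n,k}\int_0^n\phi_k(s)dX_s$ as $n\alpha\gamma_n^{-1}+(A_\infty+\alpha Z_n)\int_0^ne^{\alpha s}dB^H_s+S_n$, and the extra $-\tfrac n2$ is exactly what turns $S_n$ into $S_{n,H}$. Multiplying through by $\tfrac{\gamma_n}{n}$ and cancelling $\alpha=\tfrac{\gamma_n}{n}\cdot n\alpha\gamma_n^{-1}$ yields \eqref{estimator-alpha}.

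For \eqref{estimator-mu} I would read \eqref{estimator2} in the grouped form
\[
\hat\mu_{n,i}=\frac1n\left(\int_0^n\phi_i(s)dX_s+\gamma_n\Lambda_{n,i}\left[\sum_{k=1}^p\Lambda_{n,k}\int_0^n\phi_k(s)dX_s-\int_0^nX_sdX_s\right]\right),
\]
and observe that the bracket equals $-\tfrac{n}{\gamma_n}\hat\alpha_n$ directly from \eqref{estimator1}; hence $\hat\mu_{n,i}=\tfrac1n\int_0^n\phi_i(s)dX_s-\Lambda_{n,i}\hat\alpha_n$. Invoking the elementary identity above gives $\tfrac1n\int_0^n\phi_i(s)dX_s=\mu_i+\alpha\Lambda_{n,i}+\tfrac1n\int_0^n\phi_i(s)dB^H_s$, so that $\hat\mu_{n,i}-\mu_i=(\alpha-\hat\alpha_n)\Lambda_{n,i}+\tfrac1n\int_0^n\phi_i(s)dB^H_s$; stacking these identities for $\ff$ and recalling \eqref{def-G} produces the vector identity \eqref{estimator-mu}.

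There is no genuine difficulty in this argument; the only points demanding care are the bookkeeping of the two conventions (It\^o versus Young) for $\int_0^nX_sdX_s$, so that the $-\tfrac n2$ correction is carried into $S_{n,H}$ and nowhere else, and spotting the cancellation $\sum_{k=1}^p\Lambda_{n,k}\int_0^n\phi_k(s)dX_s-\int_0^nX_sdX_s=-\tfrac{n}{\gamma_n}\hat\alpha_n$ that collapses the expression \eqref{estimator2} for $\hat\mu_{n,i}$ to its two-term form.
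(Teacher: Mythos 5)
Your proposal is correct and follows essentially the same route as the paper: \eqref{estimator-alpha} is obtained by substituting the decomposition \eqref{numerator-alpha} into \eqref{estimator1} (with the $-\tfrac{n}{2}$ correction absorbed into $S_{n,H}$ when $H=\tfrac12$), and \eqref{estimator-mu} by collapsing \eqref{estimator2} to $\hat{\mu}_{n,i}=\tfrac1n\int_0^n\phi_i(s)dX_s-\Lambda_{n,i}\hat{\alpha}_n$ and then expanding $\int_0^n\phi_i(s)dX_s$ via \eqref{PeriodicMean} and the orthonormality of the $\phi_i$. Your treatment is if anything slightly more explicit than the paper's about the It\^o/Young bookkeeping.
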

\begin{proof}The representation \eqref{estimator-alpha} follows
directly from   \eqref{estimator1} and \eqref{numerator-alpha}. For
\eqref{estimator-mu}, according to \eqref{estimator1} and
\eqref{estimator2}, we can write, for every $i=1,\ldots,p$,
\begin{eqnarray*}
\hat{\mu}_{n, i}&=\frac{1}{n}\int_{0}^{n} \phi_{i}(s)dX_s
-\Lambda_{n, i} \hat{\alpha}_{n}.
\end{eqnarray*}
This combined with \eqref{PeriodicMean} imply that, for every
$i=1,\ldots,p$,
\begin{eqnarray*}
\hat{\mu}_{n, i}&=&\frac{1}{n}\int_{0}^{n}
 \mu_i\phi_i^2(s)ds+\frac{\alpha}{n}\int_{0}^{n}
\phi_{i}(s)X_s ds +\frac{1}{n}\int_{0}^{n} \phi_{i}(s)dB^H_s
-\Lambda_{n, k} \hat{\alpha}_{n}\\&=&
 \mu_i +\alpha \Lambda_{n, i} +\frac{1}{n}\int_{0}^{n} \phi_{i}(s)dB^H_s
-\Lambda_{n, i} \hat{\alpha}_{n},
\end{eqnarray*}
 which completes the proof.
\end{proof}

\begin{lemma}\label{lemma-cv-couple-law} Assume $\frac12\leq H<1$, and let $G_n$ be given by \eqref{def-G}.
   Let $F$ be any
$\sigma\{B^H_t,t\geq0\}-$measurable random variable such that
$P(F<\infty)=1$. Then,  as $n\rightarrow\infty$,
\begin{eqnarray}\left(e^{-\theta T}\int_0^Te^{\theta
t}dB^H_t,F,\frac{G_n}{n^{H}}\right)\overset{Law}{\longrightarrow}\left(N_1,F,
N_2\right),\label{cv-law-couple-F-G}\end{eqnarray} where
$N_1\sim\mathcal{N}(0,\sigma_H^2)$, $N_2\sim\mathcal{N}(0,D)$ and
$B^H$ are independent, with the variance
$\sigma_H^2=\frac{H\Gamma(2H)}{\alpha^{2H}}$ and the covariance
matrix $D=\left(\int_{0}^{1} \int_{0}^{1} \phi_{i}(x) \phi_{j}(y) d
x d y\right)_{1\leq i,j\leq p}$.
\end{lemma}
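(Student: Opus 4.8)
The plan is to exploit the fact that the first component, which I abbreviate $\xi_n:=e^{-\alpha n}\int_0^n e^{\alpha s}\,dB^H_s$, and the $p$ coordinates $\eta_{n,i}:=n^{-H}\int_0^n\phi_i(s)\,dB^H_s$ of $G_n/n^H$ are Wiener integrals with respect to $B^H$, so that, together with any finite sample $B^H_{t_1},\dots,B^H_{t_m}$ of the path, $(\xi_n,\eta_{n,1},\dots,\eta_{n,p},B^H_{t_1},\dots,B^H_{t_m})$ is a centered Gaussian vector. Hence the whole statement reduces to (i) computing the limits of the relevant (co)variances, and (ii) replacing the cylinder functionals $(B^H_{t_1},\dots,B^H_{t_m})$ by the general functional $F$ by a density argument. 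As a preliminary remark, $\xi_n$ has the same law as $\zeta_n=\int_0^n e^{-\alpha s}\,dB^H_s$ (time reversal of the fBm increments leaves the variance invariant), and by \eqref{i_lemma1}--\eqref{ii_lemma1} one has $\zeta_n\to\alpha Z_\infty$ a.s. and in $L^2(\Omega)$, with $\mathrm{Var}(\alpha Z_\infty)=H\Gamma(2H)\alpha^{-2H}=\sigma_H^2$ (for $H=\tfrac12$ this is $\tfrac1{2\alpha}$), as follows from a direct computation of $\E\big[\big(\int_0^\infty e^{-\alpha s}\,dB^H_s\big)^2\big]$. In particular $\xi_n\overset{Law}{\longrightarrow}N_1\sim\mathcal N(0,\sigma_H^2)$.

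The core of the proof is the (co)variance computation. For the $\eta$'s and $\tfrac12<H<1$, the covariance formula for Young integrals with respect to $B^H$ gives, after the rescaling $u=nx$, $v=ny$,
\[
\E[\eta_{n,i}\eta_{n,j}]=H(2H-1)\int_0^1\int_0^1\phi_i(nx)\,\phi_j(ny)\,|x-y|^{2H-2}\,dx\,dy,
\]
and, letting $n\to\infty$ and invoking the Riemann--Lebesgue lemma for $1$-periodic functions (so that in the limit $\phi_i(n\cdot)$ may be replaced by its average $\int_0^1\phi_i$), together with $\int_0^1\int_0^1|x-y|^{2H-2}\,dx\,dy=\tfrac1{H(2H-1)}$, one obtains $\E[\eta_{n,i}\eta_{n,j}]\to\big(\int_0^1\phi_i\big)\big(\int_0^1\phi_j\big)=D_{ij}$ (for $H=\tfrac12$ the corresponding computations are elementary, the kernel $|s-r|^{2H-2}$ being replaced by a Dirac mass so that the double integrals collapse to single ones). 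For the cross-covariances I would use only crude bounds based on $\alpha>0$ and $\sup_i\|\phi_i\|_\infty<\infty$: from $\sup_{0\le s\le n}\int_0^n|s-r|^{2H-2}\,dr\le Cn^{2H-1}$ one gets
\[
|\E[\xi_n\eta_{n,i}]|\le C\,e^{-\alpha n}n^{-H}\int_0^n e^{\alpha s}\Big(\int_0^n|s-r|^{2H-2}\,dr\Big)\,ds\le C\,n^{H-1}\to0,
\]
and, for each fixed $t\ge0$, $|\E[\xi_n B^H_t]|\le C_t\big((n-t)^{2H-2}+e^{-\alpha(n-t)}\big)\to0$ and $|\E[\eta_{n,i}B^H_t]|\le C\big(t^{2H}n^{-H}+t\,n^{H-1}\big)\to0$. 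Consequently, for every $m$ and $0\le t_1<\dots<t_m$, the covariance matrix of $(\xi_n,\eta_{n,1},\dots,\eta_{n,p},B^H_{t_1},\dots,B^H_{t_m})$ converges to a block-diagonal limit, so this Gaussian vector converges in law to $(N_1,N_{2,1},\dots,N_{2,p},B^H_{t_1},\dots,B^H_{t_m})$ with $N_1\sim\mathcal N(0,\sigma_H^2)$, $N_2=(N_{2,i})\sim\mathcal N(0,D)$, and $N_1$, $N_2$, $(B^H_{t_1},\dots,B^H_{t_m})$ mutually independent.

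It then remains to replace $(B^H_{t_1},\dots,B^H_{t_m})$ by $F$. Since $F$ is $\sigma\{B^H_t,t\ge0\}$-measurable, I would pick cylinder functionals $F_\ell=\psi_\ell(B^H_{t^{(\ell)}_1},\dots,B^H_{t^{(\ell)}_{m_\ell}})$ with $\psi_\ell$ continuous and $F_\ell\to F$ in probability. For bounded continuous $g_1\colon\R\to\R$, $g_2\colon\R^p\to\R$ and $h\colon\R\to\R$, the previous paragraph gives $\E\big[g_1(\xi_n)g_2(G_n/n^H)h(F_\ell)\big]\to\E[g_1(N_1)]\,\E[g_2(N_2)]\,\E[h(F_\ell)]$ for each fixed $\ell$, whereas $\big|\E\big[g_1(\xi_n)g_2(G_n/n^H)(h(F)-h(F_\ell))\big]\big|\le\|g_1\|_\infty\|g_2\|_\infty\,\E|h(F)-h(F_\ell)|$ uniformly in $n$; letting first $n\to\infty$ and then $\ell\to\infty$ yields $\E\big[g_1(\xi_n)g_2(G_n/n^H)h(F)\big]\to\E[g_1(N_1)]\,\E[g_2(N_2)]\,\E[h(F)]$, which is exactly \eqref{cv-law-couple-F-G}.

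The main obstacle is the second paragraph, specifically the two variance limits in the range $\tfrac12<H<1$: identifying $\E[\xi_n^2]\to\sigma_H^2$ (equivalently $\mathrm{Var}(\alpha Z_\infty)=\sigma_H^2$) rests on a Gamma-integral computation, and $\E[\eta_{n,i}\eta_{n,j}]\to D_{ij}$ rests on the averaging (Riemann--Lebesgue) argument for the rapidly oscillating factors $\phi_i(n\cdot)$. Once all the (co)variances are under control, the Gaussian structure and the approximation of $F$ by cylinder functionals make the remainder routine.
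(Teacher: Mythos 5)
Your proposal is correct and follows essentially the same route as the paper: view $\bigl(e^{-\alpha n}\int_0^n e^{\alpha s}dB^H_s,\,B^H_{t_1},\dots,B^H_{t_m},\,G_n/n^H\bigr)$ as a Gaussian vector, prove convergence of its covariance matrix to a block-diagonal limit, and pass from cylinder functionals to a general $F$ by a density/approximation argument. The only difference is that you supply direct computations (the Gamma-integral evaluation of $\sigma_H^2$ and the periodic weak-$*$ averaging argument giving $\E[\eta_{n,i}\eta_{n,j}]\to D_{ij}$) for the two limits the paper imports from \cite{BEO} and \cite{BEV2017}, and both of these computations check out.
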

\begin{proof} Using similar arguments as in the proof
of   \cite[Lemma 7]{EN} it suffices to prove that for every positive
integer $d$, and positive constants $s_1,\ldots,s_d$,
\begin{eqnarray*}\left(e^{-\alpha n}\int_0^ne^{\alpha
s}dB^H_s,B^H_{s_1},\ldots,B^H_{s_d},\frac{G_n}{n^{H}}\right)\overset{Law}{\longrightarrow}\left(N_1,B^H_{s_1},\ldots,B^H_{s_d},N_2\right)\end{eqnarray*}
as
 $n\rightarrow\infty$. Moreover, since the left-hand side in this latter convergence is a
Gaussian vector,   it is sufficient to establish the convergence of
its covariance matrix. From \cite[Lemma 6]{BEO} we have, for every
$1/2\leq H<1$,
\[\lim _{n \rightarrow \infty} E\left[\left(e^{-\alpha n} \int_{0}^{n}
e^{\alpha s} d B^H_{s}\right)^{2}\right]=\frac{H \Gamma(2
H)}{\alpha^{2 H}},\] and  for all fixed $s \geq 0$,
\[
\lim _{n \rightarrow \infty} E\left(B^H_{s} \times e^{-\alpha n}
\int_{0}^{n} e^{\alpha r} d B^H_{r}\right)=0.
\]
 Moreover, it follows from \cite{BEV2017} that,
for every $1/2\leq H<1$,  as $n \rightarrow \infty$
\[n^{-H} G_n \stackrel{\operatorname{law}}{\longrightarrow}
\mathcal{N}\left(0,D\right).
\]
Hence, to finish the proof it remains to check that, for all fixed
$s \geq 0$,
\begin{eqnarray}
\lim _{n \rightarrow \infty} E\left(B^H_{s} \times
\frac{1}{n^H}\int_{0}^{n} \phi_{i}(s)dB^H_s\right)=0,\quad
i=1,\ldots,p,\label{cv1-covariance}
\end{eqnarray}
and
\begin{eqnarray}
\lim _{n \rightarrow \infty} E\left( e^{-\alpha n} \int_{0}^{n}
e^{\alpha r} d B^H_{r} \times \frac{1}{n^H}\int_{0}^{n}
\phi_{i}(s)dB^H_s\right)=0,\quad i=1,\ldots,p.\label{cv2-covariance}
\end{eqnarray}
Let us prove \eqref{cv1-covariance}. Suppose $\frac12<H<1$. Fix $s
\geq 0$. We have, for every $i=1,\ldots,p$,
\begin{eqnarray*}
&& \left| E\left(B^H_{s} \times \frac{1}{n^H}\int_{0}^{n}
\phi_{i}(s)dB^H_s\right)\right|\\&\leq&\frac{H(2H-1)}{n^H}\int_0^ndv|\phi_{i}(v)|\int_0^sdu|u-v|^{2H-2}\\
&=&\frac{H(2H-1)}{n^H}\left(\int_0^sdv|\phi_{i}(v)|\int_0^sdu|u-v|^{2H-2}+\int_s^ndy|\phi_{i}(v)|\int_0^sdu(v-u)^{2H-2}\right)\\
&\leq&\frac{C}{n^H}\left(s^{2H}+\int_s^ndv\int_0^sdu(v-u)^{2H-2}\right)\\
&\leq&\frac{C}{n^H}\left(s^{2H}+s\int_s^ndv (v-s)^{2H-2}\right)\\
&=&C\left(\frac{s^{2H}}{n^H}+\frac{s}{(2H-1)n^H}  (n-s)^{2H-2}\right)\\
&&\longrightarrow0,
\end{eqnarray*}
as $n \rightarrow \infty$, since $H<1$.\\
For \eqref{cv2-covariance}, we have, for every $i=1,\ldots,p$,
\begin{eqnarray*}  &&\left| E\left( e^{-\alpha n} \int_{0}^{n} e^{\alpha r}
d B^H_{r} \times \frac{1}{n^H}\int_{0}^{n}
\phi_{i}(s)dB^H_s\right)\right| \\&\leq&\frac{H(2H-1)e^{-\alpha
n}}{n^H}\int_0^ndve^{\alpha v}\int_0^n|\phi_{i}(u)|du|u-v|^{2H-2}\\
&\leq&\frac{Ce^{-\alpha n}}{n^H}\int_0^ndve^{\alpha v}\int_0^n
du|u-v|^{2H-2}\\&=&C\left[\frac{e^{-\alpha
n}}{n^H}\int_0^ndve^{\alpha v}\int_0^v
du(v-u)^{2H-2}+\frac{e^{-\alpha n}}{n^H}\int_0^ndve^{\alpha
v}\int_v^n du(u-v)^{2H-2}\right].
\end{eqnarray*}
Further, by L'H\^opital's rule, \begin{eqnarray*} \lim _{n
\rightarrow \infty}\frac{e^{-\alpha n}}{n^H}\int_0^ndve^{\alpha
v}\int_0^v du(v-u)^{2H-2}&=&\lim _{n \rightarrow
\infty}\frac{e^{-\alpha n}}{(2H-1)n^H}\int_0^ndve^{\alpha v}
v^{2H-1}\\&=&\lim _{n \rightarrow \infty}  \frac{e^{\alpha n}
n^{2H-1}}{(2H-1)(n^H\alpha e^{\alpha n}+Hn^{H-1} e^{\alpha n})}
\\&=&\lim _{n \rightarrow \infty}  \frac{n^{H-1}}{(2H-1)(\alpha
+H/n)}\\
&=&0.
\end{eqnarray*}
Moreover, making the change of variables $x=n-v$, we obtain
 \begin{eqnarray*} \frac{e^{-\alpha n}}{n^H}\int_0^ndve^{\alpha
v}\int_v^n du(u-v)^{2H-2}&=& \frac{e^{-\alpha
n}}{(2H-1)n^H}\int_0^ne^{\alpha v} (n-v)^{2H-1}dv\\
&=& \frac{1}{(2H-1)n^H}\int_0^ne^{-\alpha x} x^{2H-1}dv\\
 &\leq&\frac{\Gamma(2H)}{(2H-1)n^H\alpha^{2H}}\\
 &&\longrightarrow0,
\end{eqnarray*}
as $n \rightarrow \infty$. The proof of \eqref{cv1-covariance} and
\eqref{cv2-covariance}, when $H=\frac12$, is quite similar to the
proof above. Thus the desired result is obtained.
\end{proof}

Recall that if $X\sim \mathcal{N}(m_1,\sigma_1)$ and $Y\sim
\mathcal{N}(m_2,\sigma_2)$ are two independent random variables,
then $X/Y$ follows a   Cauchy-type distribution. For a motivation
and further references, we refer the reader to \cite{PTM}, as well
as \cite{marsaglia}. Notice also that if $N\sim\mathcal{N}(0,1)$ is
independent of $B^H$, then $N$ is independent of $Z_{\infty}$, since
$Z_\infty = \int_0^{\infty}e^{-\theta s}B^H_sds$ is a functional of
$B^H$.

\begin{theorem}
Assume $\frac12\leq H<1$, and let
$N_1\sim\mathcal{N}(0,\sigma_H^2)$, $N_2\sim\mathcal{N}(0,D)$, where
 the variance $\sigma_H^2$ and  the covariance matrix $D$ are   given in
Lemma \ref{lemma-cv-couple-law}. Suppose that $N_1$, $N_2$ and $B^H$
are independent. Then, as
 $n\rightarrow\infty$,
\begin{eqnarray}e^{\alpha n}(\hat{\alpha}_n-\alpha)\overset{Law}{\longrightarrow}\frac{2\alpha N_1}{A_{\infty}
+\alpha Z_{\infty}},\label{cv-alpha-law}
\end{eqnarray}
 \begin{eqnarray}\left(e^{\alpha n}(\hat{\alpha}_n-\alpha),n^{1-H}\left(\hat{\mu}_n-\mu
\right)\right)\overset{Law}{\longrightarrow}\left(\frac{2\alpha
N_1}{A_{\infty} +\alpha Z_{\infty}},
N_2\right).\label{cv-joint-alpha-mu-law}
\end{eqnarray}
Consequently, as $n\rightarrow\infty$,
\begin{eqnarray}n^{1-H}\left(\hat{\mu}_T-\mu
\right)\overset{Law}{\longrightarrow} N_2.\label{cv-mu-law}
\end{eqnarray}
\end{theorem}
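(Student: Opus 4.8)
The plan is to read the three limits directly off the representations \eqref{estimator-alpha} and \eqref{estimator-mu}, combining the almost sure convergences of Section~2, the decomposition of Lemma~\ref{lemma-decomp-S}, and the joint convergence in law of Lemma~\ref{lemma-cv-couple-law}; no new estimate is needed, only a careful assembly. First I would multiply numerator and denominator in \eqref{estimator-alpha} by $e^{-2\alpha n}$, using $n\gamma_n^{-1}=\int_0^n X_s^2\,ds-n\sum_{k=1}^p\Lambda_{n,k}^2=I_{n,3}$, to obtain
\[e^{\alpha n}(\hat{\alpha}_n-\alpha)=\frac{(A_{\infty}+\alpha Z_n)\,\xi_n+\varepsilon_n}{\eta_n},\qquad \xi_n:=e^{-\alpha n}\!\int_0^n e^{\alpha s}\,dB^H_s,\quad \varepsilon_n:=e^{-\alpha n}S_{n,H},\quad \eta_n:=e^{-2\alpha n}I_{n,3}.\]
Then I would collect the almost sure limits $Z_n\to Z_{\infty}$ (by \eqref{ii_lemma1}), $\eta_n\to\frac{(A_{\infty}+\alpha Z_{\infty})^2}{2\alpha}$ (by \eqref{denominator}), and $\varepsilon_n\to0$; the last is exactly \eqref{cv-S} when $\frac12<H<1$, and for $H=\frac12$ it follows from \eqref{cv-S} together with $\tfrac n2e^{-\alpha n}\to0$, since there $S_{n,H}=S_n-\tfrac n2$. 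I would also note that $\frac{(A_{\infty}+\alpha Z_{\infty})^2}{2\alpha}$ is almost surely strictly positive, because $A_{\infty}$ is a deterministic constant while $Z_{\infty}=\int_0^{\infty}e^{-\alpha s}B^H_s\,ds$ is a centred Gaussian with positive variance, so $A_{\infty}+\alpha Z_{\infty}\neq0$ a.s.

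Next I would apply Lemma~\ref{lemma-cv-couple-law} with $F:=A_{\infty}+\alpha Z_{\infty}$ (which is $\sigma\{B^H_t,t\geq0\}$-measurable and a.s. finite) to get $\big(\xi_n,F,G_n/n^{H}\big)\overset{Law}{\longrightarrow}\big(N_1,F,N_2\big)$ with $N_1$, $N_2$ and $B^H$ mutually independent. Since $(Z_n,\varepsilon_n,\eta_n)$ converges almost surely, hence in probability, to $\big(Z_{\infty},0,\frac{(A_{\infty}+\alpha Z_{\infty})^2}{2\alpha}\big)$, which is a continuous function of $F$, Slutsky's lemma promotes this to
\[\big(\xi_n,Z_n,\varepsilon_n,\eta_n,G_n/n^{H}\big)\overset{Law}{\longrightarrow}\big(N_1,Z_{\infty},0,\tfrac{(A_{\infty}+\alpha Z_{\infty})^2}{2\alpha},N_2\big).\]
Feeding this into the continuous mapping theorem via $(x,z,e,\eta,y)\mapsto\big(\tfrac{(A_{\infty}+\alpha z)x+e}{\eta},y\big)$, which is continuous at the limit point precisely because the $\eta$-coordinate there is a.s. non-zero, I get $\big(e^{\alpha n}(\hat{\alpha}_n-\alpha),G_n/n^{H}\big)\overset{Law}{\longrightarrow}\big(\tfrac{2\alpha N_1}{A_{\infty}+\alpha Z_{\infty}},N_2\big)$; keeping the first coordinate gives \eqref{cv-alpha-law}.

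Finally, for \eqref{cv-joint-alpha-mu-law} I would use \eqref{estimator-mu} to write $n^{1-H}(\hat{\mu}_n-\mu)=\tfrac{G_n}{n^{H}}-R_n$, with $R_{n,i}=n^{-H}\big(e^{\alpha n}(\hat{\alpha}_n-\alpha)\big)\big(ne^{-\alpha n}\Lambda_{n,i}\big)$ for $\ff$. The factor $e^{\alpha n}(\hat{\alpha}_n-\alpha)$ is tight (being convergent in law, by the previous paragraph), $ne^{-\alpha n}\Lambda_{n,i}=e^{-\alpha n}\int_0^n\phi_i(s)X_s\,ds\to\lambda_{\phi_i}(A_{\infty}+\alpha Z_{\infty})$ a.s. by \eqref{vi_lemma2}, and $n^{-H}\to0$, so $R_n\to0$ in probability. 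Hence $\big(e^{\alpha n}(\hat{\alpha}_n-\alpha),n^{1-H}(\hat{\mu}_n-\mu)\big)$ differs from $\big(e^{\alpha n}(\hat{\alpha}_n-\alpha),G_n/n^{H}\big)$ by a term going to $0$ in probability, and Slutsky's lemma with the convergence just proved yields \eqref{cv-joint-alpha-mu-law}; \eqref{cv-mu-law} then follows by retaining only the second coordinate. I expect the main obstacle to be exactly this bookkeeping: one must keep track of the \emph{joint} law, not merely the marginals (the limit is a Cauchy-type mixture of the $B^H$-independent Gaussian $N_1$ over the $B^H$-functional $A_{\infty}+\alpha Z_{\infty}$, which is what Lemma~\ref{lemma-cv-couple-law} delivers), and the continuous mapping theorem must be pushed through a map whose single discontinuity, on $\{\eta=0\}$, is attained with probability zero.
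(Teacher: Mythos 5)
Your proposal is correct and follows essentially the same route as the paper: both rest on the representation \eqref{estimator-alpha} (via Lemma \ref{lemma-decomp-S}), the almost sure limits \eqref{ii_lemma1}, \eqref{denominator}, \eqref{cv-S}, \eqref{vi_lemma2}, the joint convergence of Lemma \ref{lemma-cv-couple-law}, and Slutsky's theorem, with the $\hat{\mu}_n$ remainder handled identically. The only (cosmetic) differences are that you package the argument as one continuous-mapping step on a five-dimensional vector where the paper splits $e^{\alpha n}(\hat{\alpha}_n-\alpha)=a_nb_n+c_n$ and treats the pieces term by term, and that you explicitly note the a.s.\ non-vanishing of $A_{\infty}+\alpha Z_{\infty}$, which the paper uses tacitly.
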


\begin{proof}Before proceeding to the proof of
\eqref{cv-joint-alpha-mu-law} we need first to prove
\eqref{cv-alpha-law}.   From \eqref{estimator-alpha}
  we can write
\begin{eqnarray}
e^{\alpha n}(\hat{\alpha}_{n}-\alpha)&=& \frac{e^{-\alpha n}\int_0^n
e^{\theta s} dB^H_s}{A_{\infty} +\alpha Z_{\infty}} \times
\frac{\left(A_{\infty} +\alpha Z_{\infty}\right)\left(A_{\infty}
+\alpha Z_n \right)}{e^{-2\alpha n}n\gamma_n^{-1}} +\frac{e^{-\alpha
n}S_{n,H}}{e^{-2\alpha n}n\gamma_n^{-1}}\nonumber \\
&=:&a_n\times b_n + c_n.\label{decomp-alpha}
\end{eqnarray}
Lemma \ref{lemma-cv-couple-law}  yields , as $n\rightarrow\infty$,
\begin{eqnarray}a_n\overset{
Law}{\longrightarrow}\frac{N_1}{A_{\infty} +\alpha
Z_{\infty}},\label{cv-law-a}\end{eqnarray}
 whereas \eqref{ii_lemma1}, \eqref{vi_lemma2} and \eqref{iii_lemma3} imply
 that
 $b_n
\longrightarrow 2\alpha$ almost surely, as $n\rightarrow\infty$. On
the other hand, by \eqref{cv-S}, \eqref{vi_lemma2} and
\eqref{iii_lemma3}, we obtain that $c_n \longrightarrow 0$ almost
surely as
$n\rightarrow\infty$.\\
Combining all these facts together with (\ref{cv-law-couple-F-G})
and Slutsky's theorem, we deduce \eqref{cv-alpha-law}.
\\
According to \eqref{estimator-mu}, we can write
\begin{eqnarray*}n^{1-H}(\hat{\mu}_n-\mu)&=&e^{\alpha n}(\alpha-\hat{\alpha}_{n})\times n^{1-H}e^{-\alpha n}(\Lambda_{n,1}, \ldots,
\Lambda_{n,p})+\frac{G_n}{n^H}\\
&=:&d_n+\frac{G_n}{n^H}.
\end{eqnarray*}
This combined with \eqref{decomp-alpha} allow to write
 \begin{eqnarray*}\left(e^{\alpha n}(\hat{\alpha}_n-\alpha),n^{1-H}\left(\hat{\mu}_n-\mu
\right)\right)&=&\left(a_n\times b_n + c_n,d_n+\frac{G_n}{n^H}\right)\\
&=&\left(a_n\times b_n ,\frac{G_n}{n^H}\right)+(c_n,d_n)
\\
&=&\left(2\alpha a_n ,\frac{G_n}{n^H}\right)+(a_n
(b_n-2\alpha),0)+(c_n,d_n).
\end{eqnarray*}
It follows from Lemma \ref{lemma-cv-couple-law}  that , as
$n\rightarrow\infty$,
\[\left(2\alpha a_n ,\frac{G_n}{n^H}\right)\overset{ Law}{\longrightarrow}\left(\frac{2\alpha N_1}{A_{\infty} +\alpha Z_{\infty}},N_2\right).\]
Furthermore, combining \eqref{cv-law-a} and $b_n \longrightarrow
2\alpha$ almost surely, together with   Slutsky's theorem, we obtain
$(a_n (b_n-2\alpha),0)\longrightarrow0$ in probability as
$n\rightarrow\infty$. Also, by \eqref{vi_lemma2},
\eqref{cv-alpha-law} and Slutsky's theorem, we have $d_n
\longrightarrow 0$  in probability as $n\rightarrow\infty$. Then,
since $c_n \longrightarrow 0$ almost surely, we get
$(c_n,d_n)\longrightarrow0$ in probability as $n\rightarrow\infty$.
Therefore, applying Slutsky's theorem, the convergence in law
\eqref{cv-joint-alpha-mu-law} is obtained.
\end{proof}

\section{Appendix}

As a direct consequence of the Borel-Cantelli Lemma, we found  the
following result, see (\cite{KN}, Lemma 2.1). Which ensures the
almost sure convergence from $L^p$ convergence.
\begin{lemma}\label{Borel-lemma}
Let $\gamma>0$. Let $(Z_{n})_{n\in\N}$ be a sequence of random
variables. If for every $p\geq 1$ there exists a constant $c_p>0$
such that for all $n\in\N$,
\begin{equation*}
(E\vert Z_n \vert^p)^{1/p}\leq c_p.n^{-\gamma},
\end{equation*}
then for all $\epsilon>0$ there exists a random variable
$\beta_{\epsilon}$ such that
\begin{equation*}
\vert Z_n\vert\leq
\beta_{\epsilon}.n^{-\gamma+\epsilon},\quad\text{almost\,surely},
\end{equation*}
for all $n\in\N$. Moreover, $E\vert \beta_{\epsilon}\vert^p<\infty$
for all $p\geq 1$.
\end{lemma}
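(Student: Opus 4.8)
The plan is to exhibit the required random variable explicitly as a weighted supremum and to control all of its moments through a single monotone interchange of supremum and sum. Concretely, I would set
\[
\beta_\epsilon := \sup_{n\in\N} n^{\gamma-\epsilon}\,|Z_n|.
\]
With this choice the desired pathwise bound $|Z_n|\le \beta_\epsilon\, n^{-\gamma+\epsilon}$ holds for every $n$ by the very definition of the supremum, provided $\beta_\epsilon<\infty$ almost surely. Thus the entire statement reduces to proving that $\beta_\epsilon$ is almost surely finite and has finite moments of every order.

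Next I would estimate the moments of $\beta_\epsilon$. Fix $p\ge 1$ and pick an integer $q\ge p$ large enough that $q\epsilon>1$. Since the summands are nonnegative, the supremum is dominated by the sum, and Tonelli's theorem permits exchanging expectation and summation:
\[
E\big[\beta_\epsilon^{\,q}\big]= E\Big[\sup_{n\in\N} n^{q(\gamma-\epsilon)}|Z_n|^{q}\Big]\le \sum_{n\in\N} n^{q(\gamma-\epsilon)}\, E\big[|Z_n|^{q}\big].
\]
Inserting the hypothesis $E[|Z_n|^{q}]\le c_q^{\,q}\,n^{-q\gamma}$ collapses the general power of $n$ and leaves
\[
E\big[\beta_\epsilon^{\,q}\big]\le c_q^{\,q}\sum_{n\in\N} n^{-q\epsilon}<\infty,
\]
the series converging precisely because $q\epsilon>1$.

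Finiteness of $E[\beta_\epsilon^{\,q}]$ immediately yields $\beta_\epsilon<\infty$ almost surely, which in turn justifies the pathwise inequality claimed in the statement. To upgrade finiteness of the $q$-th moment to finiteness of the $p$-th moment for every $p\ge1$, I would invoke Lyapunov's (equivalently Jensen's) inequality, which for $p\le q$ gives $(E[\beta_\epsilon^{\,p}])^{1/p}\le (E[\beta_\epsilon^{\,q}])^{1/q}<\infty$; since $p$ was arbitrary, $E|\beta_\epsilon|^{p}<\infty$ for all $p\ge1$.

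The only genuinely delicate point is the interchange $\sup_n(\cdot)\le\sum_n(\cdot)$ together with the observation that the resulting series $\sum_n n^{-q\epsilon}$ converges only once $q$ is taken large enough; this is exactly why a single moment bound cannot be used directly and the Lyapunov step is needed in order to reach \emph{all} $p\ge1$. A minor bookkeeping issue is the index $n=0$ (or any $n$ for which $n^{-\gamma}$ is ill-defined), which I would handle by starting both the supremum and the series at $n=1$ and absorbing the finitely many initial terms into the constant, the $L^{p}$ bound on each individual $Z_n$ making every such term harmless.
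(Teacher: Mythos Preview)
Your proof is correct. The paper does not actually supply a proof of this lemma: it merely cites \cite{KN}, Lemma~2.1, and remarks that the result is ``a direct consequence of the Borel--Cantelli Lemma.'' Your argument is self-contained and, notably, does \emph{not} invoke Borel--Cantelli at all; instead you dominate the supremum by the sum and control the $q$-th moment directly for $q>1/\epsilon$, then descend to all $p\ge1$ via Lyapunov. This is in fact the argument given in \cite{KN} itself, so the paper's attribution to Borel--Cantelli is a slight misdescription of the underlying mechanism. A Borel--Cantelli route (Markov's inequality with exponent $p>1/\epsilon$ to get $\sum_n P(|Z_n|>n^{-\gamma+\epsilon})<\infty$) would yield the almost-sure bound but would require additional work to produce a $\beta_\epsilon$ with \emph{all} moments finite; your construction delivers that integrability for free.
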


In what follows, we briefly recall some basic elements of Young
integral (see \cite{Young}),  which are helpful for some of the
arguments we use in this paper. For any $\alpha\in [0,1]$, we denote
by $\mathscr{H}^\alpha([0,T])$ the set of $\alpha$-H\"older
continuous functions, that is, the set of functions $f:[0,T]\to\R$
such that
\[
|f|_\alpha := \sup_{0\leq s<t\leq
T}\frac{|f(t)-f(s)|}{(t-s)^{\alpha}}<\infty.
\]
We also set $|f|_\infty=\sup_{t\in[0,T]}|f(t)|$, and we equip
$\mathscr{H}^\alpha([0,T])$ with the norm \[ \|f\|_\alpha :=
|f|_\alpha + |f|_\infty.\] Let $f\in\mathscr{H}^\alpha([0,T])$, and
consider the operator $T_f:\mathcal{C}^1([0,T])
\to\mathcal{C}^0([0,T])$ defined as
\[
T_f(g)(t)=\int_0^t f(u)g'(u)du, \quad t\in[0,T].
\]
It can be shown (see, e.g., \cite[Section 3.1]{nourdin}) that, for
any $\beta\in(1-\alpha,1)$, there exists a constant
$C_{\alpha,\beta,T}>0$ depending only on $\alpha$, $\beta$ and $T$
such that, for any $g\in\mathcal{C}^1([0,T])$,
\[
\left\|\int_0^\cdot f(u)g'(u)du\right\|_\beta \leq
C_{\alpha,\beta,T} \|f\|_\alpha \|g\|_\beta.
\]
We deduce that, for any $\alpha\in (0,1)$, any
$f\in\mathscr{H}^\alpha([0,T])$ and any $\beta\in(1-\alpha,1)$, the
linear operator
$T_f:\mathcal{C}^1([0,T])\subset\mathscr{H}^\beta([0,T])\to
\mathscr{H}^\beta([0,T])$, defined as $T_f(g)=\int_0^\cdot
f(u)g'(u)du$, is continuous with respect to the norm
$\|\cdot\|_\beta$. By density, it extends (in an unique way) to an
operator defined on $\mathscr{H}^\beta$. As consequence, if
$f\in\mathscr{H}^\alpha([0,T])$, if $g\in\mathscr{H}^\beta([0,T])$
and if $\alpha+\beta>1$, then the (so-called) Young integral
$\int_0^\cdot f(u)dg(u)$ is (well) defined as being $T_f(g)$.

The Young integral obeys the following formula. Let
$f\in\mathscr{H}^\alpha([0,T])$ with $\alpha\in(0,1)$, and
$g\in\mathscr{H}^\beta([0,T])$ with $\beta\in(0,1)$. If
$\alpha+\beta>1$, then $\int_0^. g_udf_u$ and $\int_0^. f_u dg_u$
are well-defined as Young integrals, and for all $t\in[0,T]$,
\begin{eqnarray}\label{IBP}
f_tg_t=f_0g_0+\int_0^t g_udf_u+\int_0^t f_u dg_u.
\end{eqnarray}


\begin{thebibliography}{99}
\bibitem{AAE}  Alazemi, F., Alsenafi, A.,   Es-Sebaiy, K. (2020). Parameter
estimation for Gaussian mean-reverting Ornstein-Uhlenbeck processes
of the second kind: non-ergodic case.  Stochastics and Dynamics
19(5), 2050011 (25 pages).



\bibitem{BEV2017}   Bajja, S.,  Es-Sebaiy, K., Viitasaari, L.  (2017). Least squares
estimator of fractional Ornstein-Uhlenbeck processes with periodic
mean. J. Korean Statist. Soc. 46(4), 608-622.

\bibitem{BS} Basawa, I.V.,  Scott, D.J. (1983). Asymptotic optimal inference for
non-ergodic models. Lecture Notes in Statistics, 17, Springer, New
York.


\bibitem{BEO}  Belfadli, R.  Es-Sebaiy, K.,  Ouknine, Y. (2011). Parameter Estimation for Fractional
Ornstein-Uhlenbeck Processes: Non-Ergodic Case. Frontiers in Science
and Engineering (An International Journal Edited by Hassan II
Academy of Science and Technology) 1(1), 1-16.

\bibitem{DFK} Dehling, H., Franke, B., Kott, T. (2010) Drift estimation for a periodic
mean reversion process. Stat Inference Stoch Process 13, 175-192

\bibitem{DFW} Dehling, H.,  Franke, B.,  Woerner, J.H.C. (2016).
Estimating drift parameters in a fractional Ornstein Uhlenbeck
process with periodic mean. Statist. Infer. Stoch. Proc., 1-14.

\bibitem{DEV}  Douissi, S., Es-Sebaiy, K.,   Viens, F. (2019). Berry-Ess\'{e}en bounds for parameter estimation of general
Gaussian processes.  ALEA, Lat. Am. J. Probab. Math. Stat. 16,
633-664.

\bibitem{EEO} El Machkouri, M., Es-Sebaiy, K.,  Ouknine, Y. (2016). Least squares
estimator for non-ergodic Ornstein-Uhlenbeck processes driven by
Gaussian processes. Journal of the Korean Statistical Society 45,
329-341.


\bibitem{EEV} El Onsy, B.,  Es-Sebaiy, K.,  Viens, F. (2017). Parameter
Estimation for a partially observed Ornstein-Uhlenbeck process with
long-memory noise. Stochastics,   89(2),   431-468.

\bibitem{EAA} Es-Sebaiy, K.,  Alazemi, F.,   Al-Foraih, M. (2019). Least squares
type estimation for discretely observed non-ergodic Gaussian
Ornstein-Uhlenbeck processes. Acta Mathematica Scientia,
 39(4), 989-1002.

\bibitem{EE}  Es-Sebaiy, K.,  Es.Sebaiy, M. (2020). Estimating drift parameters in a non-ergodic Gaussian Vasicek-type model.
 Accepted in Statistical Methods and Applications.

\bibitem{EN}   Es-Sebaiy, K.,  Nourdin, I. (2013). Parameter estimation for $\alpha$-fractional bridges.
Springer Proceedings in Mathematics and Statistics, 34,  385-412.

\bibitem{EV}   Es-Sebaiy, K., Viens, F.  (2019). Optimal rates for parameter estimation of
stationary Gaussian processes. Stochastic Processes and their
Applications, 129(9), 3018-3054.

\bibitem{HNZ} Hu, Y., Nualart, D.,   Zhou, H. (2017). Parameter estimation for
fractional Ornstein-Uhlenbeck processes of general Hurst parameter.
Statistical Inference for Stochastic Processes, 1-32.


 \bibitem{KL} Kleptsyna, M. L., Le Breton, A. (2002). Statistical analysis of the
fractional Ornstein-Uhlenbeck type process. Statistical Inference
for Stochastic Processes, 5(3):229-248.

\bibitem{KN} Kloeden, P., Neuenkirch, A. (2007). The pathwise convergence
of approximation schemes for stochastic differential equations.
\emph{LMS J. Comp. Math.} \textbf{10}, 235-253.


\bibitem{kutoyants} Kutoyants, Y. A. Statistical Inference for Ergodic Diffusion
Processes. Springer, Berlin, Heidelberg, (2004).

\bibitem{LS}  Liptser, R. S.,  Shiryaev, A. N. Statistics of Random Processes: II
Applications. Second Edition, Applications of Mathematics,
Springer-Verlag, Berlin, Heidelberg, New York, (2001).

 \bibitem{marsaglia} Marsaglia, G. (1965). Ratios of normal variables and ratios of sums
of uniform variables. J. Amer. Statist. Asso. 60:193-204.


\bibitem{nourdin} Nourdin, I. (2012). Selected aspects of fractional Brownian motion. Bocconi  \& Springer Series 4. Springer, Milan; Bocconi University Press, Milan.


\bibitem{NT}   Nourdin, I., Tran, D.T.T.   (2019). Statistical
inference for Vasicek-type model driven by Hermite processes.
Stochastic Processes and their Applications, 129(10), 3774-3791.

\bibitem{nualart-book} Nualart, D. (2006). The Malliavin calculus and related topics (Vol.
1995). Berlin: Springer.

\bibitem{PTM}Pham-Gia, T., Turkkan, N.,   Marchand, E. (2006). Density of the
ratio of two normal random variables and applications.
Communications in Statistics-Theory and Methods, 35(9), 1569-1591.


\bibitem{ST} Shevchenko, R.,  Tudor, C. A. (2019). Parameter estimation for
the Rosenblatt Ornstein-Uhlenbeck process with periodic mean.
Statistical Inference for Stochastic Processes, 1-21.

\bibitem{SV} Sottinen, T., Viitasaari, L. (2018). Parameter estimation for the
Langevin equation with stationary-increment Gaussian noise.
Statistical Inference for Stochastic Processes, 21(3), 569-601.


\bibitem{Young} Young, L. C. (1936). An inequality of the H\"older type connected with Stieltjes
integration. Acta Math. { 67}, 251-282.


\end{thebibliography}
\end{document}